\numberwithin{figure}{subsection}
\theoremstyle{plain}
\newtheorem{Theorem}{Theorem}[section]
\newtheorem{Definition}[Theorem]{Definition}
\newtheorem{Lemma}[Theorem]{Lemma}
\newtheorem{Proposition}[Theorem]{Proposition}
\newtheorem{Corollary}[Theorem]{Corollary}
\newtheorem{Example}[Theorem]{Example}
\newtheorem{Examples}[Theorem]{Examples}
\newtheorem{Non-examples}[Theorem]{Non-examples}
\begin{document}

\title{RELATIVE HYPERBOLICITY OF GRAPHICAL SMALL CANCELLATION GROUPS}

\author{Suzhen Han}

\address{Beijing International Center for Mathematical Research (BICMR), Beijing University, No. 5 Yiheyuan Road, Haidian District, Beijing, China}
\email{suzhenhan@pku.edu.cn}

\thanks{S. H. is supported by  NSFC under Grant No. 11771022 and No. 11871078.}

\keywords{Graphical small cancellation, relative hyperbolicity, asymptotic cone, tree-graded structure}

\begin{abstract}
A piece of a labelled graph $\Gamma$ defined by D. Gruber is a labelled path that embeds into $\Gamma$ in two essentially different ways.
We prove that graphical $Gr'(\frac{1}{6})$ small cancellation groups whose associated pieces have uniformly bounded length are relative hyperbolic. In fact, we show that the Cayley graph of such group presentation is asymptotically tree-graded with respect to the collection of all embedded components of the defining graph $\Gamma$, if and only if the pieces of $\Gamma$ are uniformly bounded. This implies the relative hyperbolicity by a result of C. Dru\c{t}u, D. Osin and M. Sapir.
\end{abstract}

\maketitle

\section{Introduction}

Relative hyperbolicity is first introduced by Gromov \cite{G87} to study various algebraic and geometric examples, such as fundamental groups of finite volume hyperbolic manifolds, small cancellation quotients of free products, etc. There are several equivalent definitions for relative hyperbolic groups proposed by different authors, among which we will use the one due to  C. Dru\c{t}u, D. Osin and M. Sapir. This is based on the notion of tree-graded structure of a metric space introduced in \cite{DS05}.

\begin{Definition}[tree-graded spaces]
Let ${X}$ be a complete geodesic metric space and let $\mathcal{P}$ be a collection of closed geodesic subsets (called pieces). We say ${X}$ is \textit{tree-graded with respect to} $\mathcal{P}$ if the following two properties are satisfied:
\begin{itemize}
\item[$(T_1)$] Every two different pieces have at most one common point.
\item[$(T_2)$] Every simple geodesic triangle in $\mathbb{X}$ is contained in one piece.
\end{itemize}
\end{Definition}

A metric space $X$ is called \textit{asymptotically tree-graded} with respect to a collection of subsets $\mathcal{A}$ if every asymptotic cone of $X$ is tree-graded with respect to the collection of limit sets of sequences in $\mathcal{A}$. See   Section \ref{subsection:AsymptoticCones} for precise definitions.
In \cite{DS05}, a characterization of relative hyperbolicity for finitely generated groups is presented,  which will be taken as the definition of relatively hyperbolic groups in the present paper. Namely, a group $G$ generated by $S$ is hyperbolic relative to some subgroups $H_1,\cdots,H_m$ if and only if the Cayley graph $\mathscr{G}(G,S)$ is asymptotically tree-graded with respect to the collection of all left cosets of $H_1,\cdots,H_m$. Subgroups $H_1,\cdots,H_m$ are usually called the \textit{peripheral subgroups}.

Throughout the paper,  all groups under consideration are assumed to be finitely generated.

The above characterization is further generalized in Dru\c{t}u \cite{D09} by  allowing to consider peripheral subsets instead of peripheral subgroups, and thus gives  a truely metric version of relative hyperbolicity.

\begin{Theorem}\cite[Proposition 5.1 and its proof]{D09} \label{thm:TreeGradeImplyRelHyper}
Let $G$ be a group generated by a finite set $S$. Assume that $\mathscr{G}(G,S)$ is asymptotically tree-graded with respect to a collection $\mathcal{B}$ of subsets and $G$ permutes those subsets in $\mathcal{B}$, then the following holds:
\begin{itemize}
\item there exist $B_1,\cdots,B_k\in\mathcal{B}$, such that $\mathcal{B}=\{gB_i:g\in G,1\leq i\leq k\}$;
\item $G$ is hyperbolic relative to some subgroups $H_1,\cdots,H_m$, such that every $H_i$ is the stabilizer subgroup of some $B_{j_i}$ for $1\leq j_i\leq k$ and the Hausdorff distance between $H_i$ and $B_{j_i}$ is bounded.
\end{itemize}
\end{Theorem}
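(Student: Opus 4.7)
The plan is to deduce the algebraic relative hyperbolicity from the metric condition in two main steps. First, I would show that $G$ has only finitely many orbits on $\mathcal{B}$, producing representatives $B_1, \ldots, B_k$. Second, I would prove that each setwise stabilizer $H_i := \mathrm{Stab}_G(B_i)$ lies at bounded Hausdorff distance from $B_i$ inside $\mathscr{G}(G,S)$ and is finitely generated. Combined with the invariance of asymptotic tree-gradedness under bounded perturbations of the peripheral collection, these two steps reduce the conclusion to the definition of relative hyperbolicity recalled in the introduction.

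For the finite-orbit step, the key input is a uniform coarse disjointness of distinct pieces, implied by axiom $(T_1)$ being valid in every asymptotic cone: there exist constants $\sigma,\tau>0$ such that whenever $B\neq B'\in\mathcal{B}$, the intersection $N_\sigma(B)\cap N_\sigma(B')$ has diameter at most $\tau$. Suppose, toward contradiction, infinitely many $G$-orbits exist, and pick $B_n\in\mathcal{B}$ in pairwise distinct orbits. Since $G$ acts transitively on vertices of its Cayley graph, one may translate so that $d(e,B_n)\leq 1$, and then pass to a subsequence in which all $B_n$ contain a common fixed vertex. Choosing long geodesic arcs $\gamma_n\subset B_n$ emanating from that vertex and passing to an asymptotic cone produces, in the limit, distinct pieces sharing a nondegenerate arc, contradicting $(T_1)$.

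For the Hausdorff-closeness step, fix $x_i\in B_i$. Given $y\in B_i$, use transitivity to write $y=g x_i$, so that $gB_i$ is another piece containing $y$. Selecting $y$ so that a long subarc of $B_i$ through $y$ also lies in $gB_i$, the disjointness constant $\tau$ forces $gB_i=B_i$, i.e.\ $g\in H_i$. Hence $H_i\cdot x_i$ is coarsely dense in $B_i$, giving the desired Hausdorff bound. A Milnor--Schwarz style argument applied to the action of $H_i$ on $B_i$ (which is a closed geodesic subset of the Cayley graph by hypothesis) then certifies that $H_i$ is finitely generated.

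Finally, the collection of all left cosets $\{gH_i : g\in G,\ 1\leq i\leq k\}$ is at uniformly bounded Hausdorff distance from $\mathcal{B}$, element by element, using the permutation action of $G$ together with the previous step. Since asymptotic tree-gradedness is invariant under such bounded perturbations of the peripheral collection, $\mathscr{G}(G,S)$ is asymptotically tree-graded with respect to this coset collection, which by definition says $G$ is hyperbolic relative to $\{H_1,\ldots,H_m\}$ (where $m\leq k$ after discarding redundant conjugate copies). The main obstacle I anticipate is the finite-orbit argument: extracting the coarse disjointness constant $\tau$ quantitatively from $(T_1)$ in asymptotic cones, and selecting the correct long-arc configuration at the ultrafilter level, is where the bulk of the technical work lies.
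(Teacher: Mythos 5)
The paper states this as a theorem of Dru\c{t}u (citing Proposition~5.1 of \cite{D09}) and supplies no proof of its own, so there is no in-text argument to compare against. Your skeleton---(i) finitely many $G$-orbits on $\mathcal{B}$, (ii) coarse density of the stabilizer $H_i$ in $B_i$, (iii) conclude by bounded-Hausdorff perturbation of the peripheral collection---is the right skeleton, and you correctly identify the quantitative version $(\Lambda_1)$ of $(T_1)$ (uniform constants $\sigma,\tau$, as recalled in Lemma~\ref{lemma:tree-graded}) as the key input. But both main steps, as written, contain genuine gaps.

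In the finite-orbit step, the appeal to asymptotic cones does not deliver a contradiction: the limit sets $\lim_\omega B_n$ of the translated pieces need not be distinct in the cone, and even when distinct there is no reason supplied for two of them to share a nondegenerate arc. The contradiction lives entirely in $X$, not in a cone: by local finiteness of the Cayley graph, only finitely many geodesic segments of length $\tau+1$ emanate from a fixed vertex $v$; every unbounded piece through $v$ contains such a segment (pieces are geodesic subsets); so if infinitely many distinct pieces passed through $v$, two of them would contain a common segment of length $\tau+1$, forcing $\mathrm{diam}\bigl(N_\sigma(B)\cap N_\sigma(B')\bigr)>\tau$ and contradicting $(\Lambda_1)$. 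The coarse-density step has a circularity: in $y=gx_i$ the element $g$ is \emph{determined} by $y$, so ``selecting $y$ so that a long subarc of $B_i$ through $y$ also lies in $gB_i$'' is not a free choice, and in any case one must handle all $y\in B_i$, not only favorable ones. The correct argument is another local-finiteness pigeonhole: for $g=yx_i^{-1}$, the translate $g^{-1}B_i$ is a piece through $x_i$; there are only finitely many such pieces $C_1,\dots,C_N$; for each $j$ the set $\{h:hC_j=B_i\}$ is, when nonempty, a single right coset $H_ig_j$; hence $g\in\bigcup_j H_ig_j$ and $d(y,H_i\cdot x_i)\le\max_j d(g_jx_i,x_i)$. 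Once (i) and (ii) are repaired, finite generation of $H_i$ by Milnor--Schwarz and the bounded-perturbation conclusion go through as you describe.
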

If the assumption ``$G$ permutes the subsets in $\mathcal{B}$'' were removed, the conclusion that $G$ is relatively hyperbolic also holds, which is a deeper result of Dru\c{t}u \cite[Theorem 1.6]{D09}.

The goal of the present paper is to characterize relatively hyperbolic groups in the class of graphical small cancellation groups introduced by Gromov \cite{G03}. This is a generalization of classical small cancellation groups, which serves a powerful tool to construct groups with prescribed embedded subgraphs in their Cayley graphs.

Given a directed graph $\Gamma$ labeled by a finite set $S$ of letters, let $G(\Gamma)$ be the group presented by $\langle S| ~\text{labels of simple circles in}~ \Gamma\rangle$. A \textit{piece} is a labelled path that embeds into $\Gamma$ in two different ways up to label-preserving automorphisms of $\Gamma$. The $Gr'(\frac{1}{6})$ condition imposed on the graph is a generalized graphical version of $C'(\frac{1}{6})$ condition, which requires the length of each piece is less than $\frac{1}{6}$ of the length of simple circles containing this piece.  See Section \ref{subsection:GraphicalSmallCancellation} for precise definitions.

A fundamental fact of  graphical small cancellation theory  is that under the graphical $Gr'(\frac{1}{6})$ condition, every component  of $\Gamma$ admits a convex embedding into the Cayley graph of $G(\Gamma)$ (see Lemma \ref{lemma:EmbedConvex}).  It provides a rich source of finitely generated groups with exotic or extreme properties: Gromov monster groups with an infinite expander family of graphs in their Cayley graphs \cite{G03,AD08}, counterexamples to the Baum-Connes conjecture with coefficients \cite{HLS02}, etc.

Recently, Gruber and Sisto \cite{GS18} showed that non-elementary graphical $Gr(7)$-labelled, in particular, $Gr'(\frac{1}{6})$-labelled groups are acylindrically hyperbolic (a weaker condition than relative hyperbolicity). It is indicated  in \cite{GS18} that the Cayley graph of such group presentation is weekly hyperbolic relative to the embedded components of the defining graph. This suggested an analogy with relatively hyperbolic space: the embedded components of the defining graph corresponding to the peripheral regions of relatively hyperbolic space. This is also well informed in the work of Arzhantseva-Cashen-Gruber-Hume  \cite{ACGH19}. Informally, typical results there say that geodesics which have bounded penetration into embedded components are strongly contracting, hence they behave like hyperbolic geodesics. However, the first example of strongly contracting element being unstable under changing generating set is constructed there as well. This is different from the fact in relatively hyperbolic groups that every hyperbolic element is strongly contracting for any generating set. Therefore, it is an interesting question to understand which $Gr'(\frac{1}{6})$-labelled group is relatively hyperbolic.

The main result of this paper is the following characterization of the asymptotically tree-graded structure of $Gr'(\frac{1}{6})$-labelled groups with respect to the embedded components.

\begin{Theorem}\label{thm:RelativeHyperbolicity}
Let $\Gamma$ be a $Gr'(\frac{1}{6})$-labelled graph labelled by $S$. Then the Cayley graph $\mathscr{G}(G(\Gamma),S)$ is asymptotically tree-graded with respect to the collection $\mathcal{A}$ of all embedded components of $\Gamma$ if and only if all pieces of $\Gamma$ have uniformly bounded length.
\end{Theorem}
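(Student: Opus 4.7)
\textit{Plan.} I will prove the equivalence in two directions. Necessity of bounded pieces is a direct asymptotic cone argument: unbounded pieces force two peripheral subsets to share a nondegenerate geodesic in some cone, violating $(T_1)$. Sufficiency is the main content and will be established by verifying the Dru\c{t}u--Sapir metric criterion for asymptotic tree-gradedness, using the convex embedding of components (Lemma \ref{lemma:EmbedConvex}) together with disc-diagram analysis under the $Gr'(\frac{1}{6})$ condition.

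\textit{Necessity.} Assume $\mathscr{G}(G(\Gamma),S)$ is asymptotically tree-graded with respect to $\mathcal{A}$, and suppose for contradiction that there exist pieces $p_n \subset \Gamma$ of length $\ell_n \to \infty$. Each $p_n$ admits two essentially different label-preserving embeddings into $\Gamma$. After translating by group elements, their images in the Cayley graph share a common initial vertex $x_n$, and they coincide as labelled paths since a label-sequence determines a path from its starting vertex. Yet they lie in two \emph{distinct} embedded components $A_n, B_n \in \mathcal{A}$: if they lay in the same embedded component, the two embeddings would differ only by a label-preserving automorphism, contradicting the definition of a piece. Form the asymptotic cone with basepoints $(x_n)$ and scaling factors $\ell_n$. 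The limits $A_\omega = \lim_\omega A_n$ and $B_\omega = \lim_\omega B_n$ share the limit of $p_n$, a nondegenerate geodesic of length $1$; local divergence of $A_n$ from $B_n$ past the endpoints of $p_n$ distinguishes $A_\omega$ from $B_\omega$. This contradicts $(T_1)$.

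\textit{Sufficiency.} Assume piece-lengths are bounded by a constant $C$. By the Dru\c{t}u--Sapir characterization (see \cite{DS05}, Theorem 4.1), asymptotic tree-gradedness with respect to $\mathcal{A}$ reduces to verifying:
(i) $\sup_{A \ne B \in \mathcal{A}} \mathrm{diam}\bigl(N_\delta(A) \cap N_\delta(B)\bigr) < \infty$ for every $\delta \ge 0$; and
(ii) for every $\lambda \ge 1$, $c \ge 0$, $\theta > 0$, there exists $M$ such that every $\theta$-fat $(\lambda,c)$-quasi-geodesic polygon is contained in $N_M(A)$ for some $A \in \mathcal{A}$.
For (i), given far-apart $u, v \in N_\delta(A) \cap N_\delta(B)$, form a quadrilateral loop whose two long sides are geodesics inside $A$ and $B$ joining the nearest-point projections of $u,v$, and whose two short sides are bridges of length $\le \delta$. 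Applying the graphical Greendlinger lemma to a minimal disc diagram filling this loop, the ``$A$-face'' and ``$B$-face'' must share a common subpath, which is a piece whose length grows linearly with $d(u,v)$; boundedness of pieces by $C$ then caps $d(u,v)$ uniformly. For (ii), a minimal disc diagram for a fat polygon is either combinatorially thin (in which case the polygon itself is thin, hence not fat) or is dominated by a single face, whose corresponding embedded component contains the polygon in a bounded neighborhood.

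\textit{Main obstacle.} The technical crux is (ii). Graphical Greendlinger-type results readily produce faces with large boundary overlap in any minimal diagram, but establishing the dichotomy ``thin polygon versus single dominating face'' requires careful control of how pieces might chain distinct faces across the diagram. The uniform bound $C$ on piece-length is precisely what caps the length of such chains, and is thus the key hypothesis that forces this dichotomy.
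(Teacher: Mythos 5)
There is a genuine gap in the sufficiency direction, and you acknowledge it yourself under ``Main obstacle'' without resolving it. The dichotomy you assert --- that a minimal diagram over $\langle S|R\rangle$ filling a $\theta$-fat quasi-geodesic polygon is ``either combinatorially thin or dominated by a single face'' --- is not established and is not the right form of the statement. A fat polygon can perfectly well bound a diagram with many interior faces; what one actually has to prove is that under $Gr'(\frac{1}{6})$ with $M$-bounded pieces, \emph{fatness forces} the diagram to collapse in a quantitative way, and this requires real combinatorial input. The paper does not verify your condition (ii) directly: it uses the later Dru\c{t}u criterion (Lemma~\ref{lemma:tree-graded}, combining \cite{DS05} and \cite{D09}) which replaces the fat quasi-geodesic polygon condition with $(\Lambda_2)$ together with property $(\Omega_3)$ about simple triangles of \emph{limit geodesics} in the cone, and then exploits two specific classification theorems: Strebel's classification of combinatorial bigons and triangles (Theorem~\ref{thm:ClassificationForTriangle}) and the Arzhantseva--Cashen--Gruber--Hume classification of special combinatorial quadrangles (used through Lemma~\ref{lemma:SpecialCombGeodQuadrangles}). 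The key step is that a simple non-degenerate irreducible combinatorial quadrangle always has two opposite sides joined by a path of at most $6$ interior arcs, hence of length $\le 6M$; this is what contradicts fatness and lets one prove $(\Omega_2)$ for bigons, after which $(\Omega_3)$ for triangles is deduced by a separate cut-and-paste argument (Lemma~\ref{lemma:ASatisfyPi3}). None of this is captured by ``capping chains of pieces,'' and without it the argument does not close. Additionally, the paper first establishes that $\mathcal{A}$ is a strongly $(2M,0)$-contracting system (Proposition~\ref{proposition:contraction}), and both $(\Lambda_1)$ and $(\Lambda_2)$ are derived from contraction together with convexity of embedded components; your Greendlinger-based sketch of (i), which speaks of ``the $A$-face and the $B$-face,'' conflates boundary arcs with $2$-cells and would need to be reworked along those lines.

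On necessity, your cone argument has a soft spot: you need $\lim_\omega A_n \ne \lim_\omega B_n$ to contradict $(T_1)$, but ``local divergence of $A_n$ from $B_n$ past the endpoints of $p_n$'' could be of size $o(\ell_n)$ and vanish in the cone, so $A_\omega = B_\omega$ is not excluded. The paper's route is shorter and avoids this: asymptotic tree-gradedness implies $(\Lambda_1)$, i.e.\ intersections of distinct embedded components have uniformly bounded diameter, and pieces embed into such intersections by Lemma~\ref{lemma:PieceIntersection}, so pieces are uniformly bounded. You may wish to replace your cone argument with this direct one.
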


For a $Gr'(\frac{1}{6})$-labelled graph $\Gamma$, $G(\Gamma)$   permutes   embedded components in $\mathcal{A}$. If $A_i\in \mathcal{A}$ is an embedded component in $\mathscr{G}(G(\Gamma),S)$ corresponding to a component $\Gamma_i$ of $\Gamma$, then the stabilizer group $\mathrm{Stab}(A_i)$ is isomorphic to the label-preserving automorphism group of component $\Gamma_i$ by Lemma \ref{lemma:Aut=Stab}. Therefore by Theorem \ref{thm:TreeGradeImplyRelHyper}, we obtain the following  corollary.

\begin{Corollary}\label{cor:RelativeHyperbolicity}
Let $\Gamma$ be a $Gr'(\frac{1}{6})$-labelled graph labelled by $S$. If the pieces of $\Gamma$ have uniformly bounded length, then $\Gamma$ has only finitely many components up to label-preserving automorphism and $G(\Gamma)$ is hyperbolic relative to some subgroups $H_1,\cdots,H_m$, where  $H_i$ are the label-preserving automorphism groups of these components.
\end{Corollary}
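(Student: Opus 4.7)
The plan is to assemble the corollary directly from the three results already stated in the excerpt: Theorem~\ref{thm:RelativeHyperbolicity} (the main theorem), Theorem~\ref{thm:TreeGradeImplyRelHyper} (the Dru\c{t}u--Osin--Sapir criterion), and Lemma~\ref{lemma:Aut=Stab} (stabilizers of embedded components equal their label-preserving automorphism groups). The hypothesis that pieces of $\Gamma$ have uniformly bounded length is exactly the ``only if'' side of Theorem~\ref{thm:RelativeHyperbolicity}, so $\mathscr{G}(G(\Gamma),S)$ is asymptotically tree-graded with respect to the collection $\mathcal{A}$ of all embedded components of $\Gamma$.

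To apply Theorem~\ref{thm:TreeGradeImplyRelHyper}, I first verify that $G(\Gamma)$ permutes $\mathcal{A}$. Left multiplication by any $g\in G(\Gamma)$ is a label-preserving isometry of the Cayley graph, so it carries each embedded component of $\Gamma$ to another embedded labelled subgraph which, by Lemma~\ref{lemma:EmbedConvex} applied to the underlying abstract component, is again an embedded component in the sense of $\mathcal{A}$. Theorem~\ref{thm:TreeGradeImplyRelHyper} then yields finitely many representatives $B_1,\ldots,B_k\in\mathcal{A}$ with $\mathcal{A}=\{g B_i : g\in G(\Gamma),\,1\leq i\leq k\}$, and produces peripheral subgroups $H_1,\ldots,H_m$ each equal to the stabilizer of some $B_{j_i}$.

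The next step is the bookkeeping that translates ``finitely many $G(\Gamma)$-orbits in $\mathcal{A}$'' into ``finitely many components of $\Gamma$ up to label-preserving automorphism''. Every component $\Gamma'$ of $\Gamma$ gives rise, via Lemma~\ref{lemma:EmbedConvex}, to at least one member of $\mathcal{A}$, which must lie in some orbit $G(\Gamma)\cdot B_i$; and two embedded components in the same orbit differ by a label-preserving isometry of the Cayley graph, so the corresponding abstract components of $\Gamma$ are label-preserving isomorphic. Running this argument in both directions shows that the label-preserving isomorphism classes of components of $\Gamma$ are in bijection with $G(\Gamma)$-orbits of $\mathcal{A}$, and hence there are at most $k$ of them.

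Finally, for each representative $B_{j_i}$ corresponding to a component $\Gamma_{j_i}$, Lemma~\ref{lemma:Aut=Stab} identifies the stabilizer $\mathrm{Stab}(B_{j_i})$ with the label-preserving automorphism group $\mathrm{Aut}_{\mathrm{lab}}(\Gamma_{j_i})$, and the conclusion of Theorem~\ref{thm:TreeGradeImplyRelHyper} identifies each $H_i$ with such a stabilizer. The step I expect to require the most care is the bookkeeping above: I must match up the abstract notion of ``component up to label-preserving automorphism'' (intrinsic to $\Gamma$) with the notion of ``orbit of embedded component in $\mathscr{G}(G(\Gamma),S)$'' (extrinsic, depending on the embeddings). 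The bridge is precisely the canonical identification in Lemma~\ref{lemma:Aut=Stab}, so no additional technical machinery beyond the cited results is needed, and the corollary then follows.
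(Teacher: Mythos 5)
Your proposal is correct and follows essentially the same route as the paper: the main theorem gives the asymptotically tree-graded structure, $G(\Gamma)$ permutes the embedded components, Theorem~\ref{thm:TreeGradeImplyRelHyper} then yields finitely many orbits and relative hyperbolicity with peripheral subgroups the stabilizers, and Lemma~\ref{lemma:Aut=Stab} identifies these with the label-preserving automorphism groups of the components. The only nit is terminological: deducing the tree-graded structure from bounded pieces uses the ``if'' direction of Theorem~\ref{thm:RelativeHyperbolicity}, not the ``only if'' direction as you wrote.
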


The (relative) hyperbolicity has been considered by other authors in the literature. For instacne,  A. Pankrat'ev \cite{P99} and  M. Steenbock \cite[Theorem 1]{S15} studied the hyperbolicity of (graphical) small cancellation quotients of free products of hyperbolic groups. In his thesis, D. Gruber \cite[Theorem 2.9]{dG15Thesis} generalized their results to establish the relative hyperbolicity of the free products of groups subject to finite labelled graphs satisfying the graphical small cancellation condition.

\vspace{.5em}
\noindent\textbf{Outline of the proof.}
Assume that the pieces of $\Gamma$ are uniformly bounded.
Let $\mathcal{A}$ be the collection of all embedded components of $\Gamma$. We want to show $\mathscr{G}(G(\Gamma),S)$ is asymptotically tree-graded with respect to $\mathcal{A}$. Combining   \cite[Lemma 4.5]{DS05}   and \cite[Corollary 4.19]{D09} together, it is sufficient to verify that $\mathcal{A}$ satisfies the following list of geometric properties:
\begin{Lemma}\cite{DS05,D09}\label{lemma:tree-graded}
Let $(Y,d_Y)$ be a geodesic metric space and let $\mathcal{B}$ be a collection of subsets of $Y$. The metric space $Y$ is
asymptotically tree-graded with respect to $\mathcal{B}$ if and only if the following properties are satisfied:
\begin{itemize}
\item[($\Lambda_1$)] finite radius tubular neighborhoods of distinct elements in $\mathcal{B}$ are either disjoint or intersecting with uniformly bounded diameters.
\item[($\Lambda_2$)] a geodesic with endpoints at distance at most one third of its length from a set $B$ in $\mathcal{B}$ intersects a uniformly bounded radius tubular neighborhood of $B$.
\item[($\Omega_3$)] for any asymptotic cone $\mathrm{Con}_{\omega}(X; (e_n), (l_n))$, any simple triangle with edges limit geodesics is contained in a subset from $\mathcal{B}_\omega$.
\end{itemize}
\end{Lemma}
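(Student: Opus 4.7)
The plan is to derive this lemma as a direct combination of the two references indicated in the outline, namely \cite[Lemma 4.5]{DS05} and \cite[Corollary 4.19]{D09}, rather than to reprove the full equivalence from scratch. Dru\c{t}u--Sapir established that asymptotic tree-gradedness is equivalent to a list of finite-scale metric properties of the collection $\mathcal{B}$, among which two conditions are (essentially) the present $(\Lambda_1)$ and $(\Lambda_2)$; the remaining condition they use is a combinatorial ``fatness of polygons'' statement that is more convenient to check at the level of asymptotic cones. Dru\c{t}u's later refinement allows one to replace this combinatorial condition by the clean statement $(\Omega_3)$, i.e.\ the $(T_2)$ axiom restricted to triangles whose sides are limit geodesics. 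So the body of the proof should consist of matching the three properties here with the corresponding properties in those two papers and invoking them.

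For the forward implication I would argue by contradiction in each coordinate. A failure of $(\Lambda_1)$ at every scale produces sequences of pairs in $\mathcal{B}$ whose tubular neighborhoods of fixed radius intersect in sets of diameter tending to infinity; choosing basepoints and scaling factors along this sequence gives, in the resulting asymptotic cone, two distinct limit pieces sharing more than one point, which contradicts $(T_1)$. A failure of $(\Lambda_2)$ produces a sequence of geodesics whose endpoints are much closer to some $B_n\in\mathcal{B}$ than the geodesic itself is, and rescaling yields in the cone a nontrivial limit geodesic with endpoints on a limit piece $B_\omega$ that leaves every metric neighborhood of $B_\omega$, contradicting the projection behavior forced by tree-gradedness (this is \cite[Lemma 2.28]{DS05} applied in the cone). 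Finally $(\Omega_3)$ is just the axiom $(T_2)$ read inside the cone.

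For the converse, I would fix an arbitrary asymptotic cone $\mathrm{Con}_\omega(Y;(e_n),(l_n))$ and the induced collection $\mathcal{B}_\omega$ of limit subsets, and verify $(T_1)$ and $(T_2)$. Property $(T_1)$ follows from $(\Lambda_1)$: if two distinct limit pieces met in more than one point, a short transverse arc would survive in the ultralimit and witness an unboundedly large intersection of tubular neighborhoods of the approximating finite-scale pieces, contradicting $(\Lambda_1)$. Property $(T_2)$ is supplied by $(\Omega_3)$. Property $(\Lambda_2)$ is used separately to ensure that each element of $\mathcal{B}_\omega$ is closed and geodesic (so a legitimate piece) and to show that geodesics in the cone project nicely onto pieces, which is the technical ingredient needed to upgrade ``triangles of limit geodesics'' to ``arbitrary simple triangles'' via Dru\c{t}u's Corollary~4.19.

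The main obstacle is precisely this last point, namely justifying that checking $(T_2)$ only for triangles built from limit geodesics is enough to get it for all simple geodesic triangles in the cone. This is exactly the content of \cite[Corollary 4.19]{D09}, which replaces the original combinatorial condition of \cite{DS05} by $(\Omega_3)$, and I would invoke it as a black box; once it is in hand, the remainder of the argument is a matter of aligning definitions and bookkeeping between the two sources.
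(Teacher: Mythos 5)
The paper does not prove this lemma at all: it is stated as a citation, with the preceding paragraph explaining that it follows by "combining \cite[Lemma 4.5]{DS05} and \cite[Corollary 4.19]{D09}." Your proposal identifies exactly these two sources and defers the real work — in particular, the upgrade from triangles of limit geodesics to arbitrary simple geodesic triangles — to Dru\c{t}u's Corollary~4.19 as a black box, which is precisely what the paper does, so the approach coincides with the paper's.
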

Note that the set $\mathcal{B}_\omega$ is the $\omega$-limit of $\mathcal{B}$. Limit geodesic is the one that can be represented as an $\omega$-limit of a sequence geodesics.

First of all, we follow the approach of \cite{ACGH19} by using  Strebel's classification of combinatorial geodesic triangles to show that $\mathcal{A}$ is a strongly contracting system. Thus, the properties $(\Lambda_1)$ and $(\Lambda_2)$ of $\mathcal{A}$ follow from the strongly contracting property.

To verify the property $(\Omega_3)$, we take a detour by first showing  the property $(\Omega_2)$: for any asymptotic cone $\mathrm{Con}_{\omega}(X; (e_n), (l_n))$, any simple bigon with edges limit geodesics is contained in a subset from $\mathcal{B}_\omega$.  The reason why we do not verify property $(\Omega_3)$ directly is that there is little  known   about the classification  of combinatorial hexagons. Note that any simple bigon with edges limit geodesics can be represented as an $\omega$-limit of a sequence `fat' simple geodesic quadrangles by the strategy in the proof  of \cite[Proposition 4.14]{D09}. However, the classification of `special' combinatorial quadrangles provided by  Arzhantseva-Cashen-Gruber-Hume \cite{ACGH19} shows us that there is no `fat' special combinatorial quadrangle. This gives us great help to show the property $(\Omega_2)$. Further analysis are carried out to obtain property $(\Omega_3)$.

The ``only if'' part of the theorem is obvious by the ($\Lambda_1$) property and the fact that pieces of $\Gamma$ can always be realized as the paths contained in the intersections of two different embedded components.

\vspace{.5em}
\noindent\textbf{The structure of this paper.}
Section \ref{section:Preliminaries} sets up the notations used in the present paper and contains preliminaries on graphical small cancellation groups, asymptotic cones, tree-graded metric spaces. The proof of Theorem  \ref{thm:RelativeHyperbolicity} takes up the whole Section \ref{section:proof} and we will verify in order the desired properties listed in Lemma \ref{lemma:tree-graded}. Some examples are also presented at the end to illustrate the applications of the main theorem.

\vspace{.5em}
\noindent\textbf{Acknowledgement.}
The author wishes to thank Prof. Wen-yuan Yang for suggesting the problem, many useful comments and helpful conversations. The author also would like to thank Prof. G. N. Arzhantseva for many useful suggestions on the writing, and thank Dr. A. Genevois for bringing to me the references \cite{S15} and \cite{dG15Thesis} and useful suggestions.

\section{Preliminaries} \label{section:Preliminaries}

Let $A$ be a subset in a metric space $(X,d)$. We denote by $N_\delta(A)$ (resp. $\overline{N}_\delta(A)$) the set $\{x\in X | d(x,A)<\delta\}$
(resp.  $\{x\in X | d(x,A)\leq\delta\}$), which we call the (resp.  closed) $\delta$-tubular neighborhood of $A$.
And we denote by $\mathrm{diam}(A)$ the diameter of $A$. Given a point $x\in X$, and a subset $Y\subseteq X$, let $\pi_Y(x)$ be the set of point $y$ in $Y$ such that $d(x,y)=d(x,Y)$. The \textit{projection} of $A$ to $Y$ is the set $\pi_Y(A):=\cup_{a\in A}\Pi_Y(a)$.

The path $\gamma$  in $X$ under consideration is always assumed to be rectifiable with arc-length parametrization $[0, |\gamma|]\to \gamma$, where $|\gamma|$ denotes the length of $\gamma$. Denote by $\gamma_-,\gamma_+$ the initial and terminal points of $\gamma$ respectively, if $\gamma_i$ has subscript, we will denote by $\gamma_{i-},\gamma_{i+}$ for simplicity. And we denote by $\overline{\gamma}$ the inverse of $\gamma$, i.e. $\overline{\gamma}$ has parametrization $\overline{\gamma}(t)=\gamma(|\gamma|-t)$. For any two parameters $a<b\in [0, |\gamma|]$,  we denote by $[\gamma(a),\gamma(b)]_\gamma:=\gamma([a,b])$ the closed subpath of $\gamma$ between $\gamma(a)$ and $\gamma(b)$, The symbols $(\gamma(a),\gamma(b))_\gamma:=\gamma((a,b))$, $[\gamma(a),\gamma(b))_\gamma:=\gamma([a,b))$ and $(\gamma(a),\gamma(b])_\gamma:=\gamma((a,b])$
are defined analogously. For any $x,y\in X$, we denote by $[x,y]$ a choice of geodesic in $X$ from $x$ to $y$.

\subsection{Graphical small cancellation}  \label{subsection:GraphicalSmallCancellation}

As a generalization of classical small cancellation theory, the main benefit of graphical small cancellation theory is to embed a series of graphs into the Cayley graph of a group.
It was introduced by Gromov \cite{G03}, and then was modified by various authors \cite{O06,AD08,dG15}.

We will state the graphical small cancellation conditions following \cite{dG15}, before which we describe the group defined by a labelled graph, which first appeared in \cite{RS87}.

Let $\Gamma$ be a possibly infinite, and possibly non-connected, directed graph. A \textit{labelling} of $\Gamma$ by a set $S$ is a map assigning to each edge an element of $S$. The label of an edge path is the word in $M(S)$ (the free monoid on $S\sqcup S^{-1}$) reading along the path. If the corresponding edge is traversed in the opposite direction, then we read the formal inverse latter of the given label, otherwise just read the labelled letter.

The labelling is \textit{reduced} if at every vertex $v$, any two oriented edges both originating from $v$ or both terminating at $v$ have distinct labels.
Let $R$ be the set of words in $F(S)$ (the free group on $S$) read on simple cycles in $\Gamma$. Reducibility implies that the words in $R$ are cyclically reduced.  The \textit{group $G(\Gamma)$ defined by $\Gamma$} is given by the presentation $\langle S|R\rangle$.

\begin{Definition}(Piece \cite[Definition 1.5]{dG15Thesis}) \label{def:piece}
Let $\Gamma, \alpha$ be reduced $S$-labelled graphs. Two label-preserving maps of labelled graphs $\psi_1,\psi_2: \alpha\rightarrow\Gamma$ are called \textit{essentially distinct} if there is no label-preserving automorphism $\eta$ of $\Gamma$ with $\psi_2=\eta\circ\psi_1$.
A \textit{piece} of $\Gamma$ is a labelled path $\alpha$ (treated as a labelled graph) that admits two essentially distinct label-preserving maps $\phi_1,\phi_2:\alpha\rightarrow\Gamma$.
\end{Definition}

\begin{Definition}[$Gr'(\frac{1}{6})$ and $C'(\frac{1}{6})$ conditions] \label{def:GraphicalSmallCancellation}
A reduced $S$-labelled graph $\Gamma$ is $Gr'(\frac{1}{6})$-labelled if any piece $\alpha$ contained in a simple cycle $\mathcal{O}$ of $\Gamma$ satisfies $|\alpha|<\frac{1}{6}|\mathcal{O}|$.

 A presentation $\langle S|R\rangle$ satisfies the classical $C'(\frac{1}{6})$ condition if the graph, constructed as disjoint union of cycle graphs labelled by the words in $R$, is $Gr'(\frac{1}{6})$-labelled.
\end{Definition}


Let $Y$ be a geodesic metric space, a subset $A\subseteq Y$ is \textit{convex} if every geodesic segment with endpoints in $A$ is contained in $A$.

\begin{Lemma}\cite[Lemma 2.15]{GS18} \label{lemma:EmbedConvex}
Let $\Gamma_0$ be a component of a $Gr'(\frac{1}{6})$-labelled graph $\Gamma$. For any choice of base vertices $x\in \mathscr{G}(G(\Gamma),S)$, $y\in \Gamma_0$, there is a unique label-preserving map $f:\Gamma_0\rightarrow \mathscr{G}(G(\Gamma),S)$ with $f(y)=x$. Moreover, such $f$ is an isometric embedding and its image in $X$ is convex.
\end{Lemma}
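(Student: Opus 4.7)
The plan is to establish the three claims separately: existence of $f$, uniqueness of $f$, and the isometric/convex embedding property.

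For existence, I would define $f$ inductively along edges, starting from $f(y)=x$. If a vertex $v$ has been mapped and $v'$ is a neighbor of $v$ via an edge labelled $s^{\pm 1}$, send $v'$ to the unique neighbor of $f(v)$ in $\mathscr{G}(G(\Gamma),S)$ joined to $f(v)$ by an edge with the matching label and orientation; the uniqueness of this neighbor uses the reducedness of the Cayley graph labelling, itself inherited from the reducedness of $\Gamma$. Well-definedness then reduces to showing that the label $w(c)$ of any closed edge loop $c$ in $\Gamma_0$ based at $y$ represents the identity in $G(\Gamma)$. Since $\pi_1(\Gamma_0, y)$ is generated by simple cycles and the labels of simple cycles in $\Gamma$ are by definition the relators of the presentation $\langle S\mid R\rangle$ of $G(\Gamma)$, this holds. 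The same edge-extension procedure forces any label-preserving map sending $y$ to $x$ to coincide with $f$, giving uniqueness.

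For the isometric embedding and convexity, I would use a van Kampen diagram argument based on the graphical Greendlinger lemma. Fix vertices $p,q\in\Gamma_0$, let $\alpha$ be a geodesic from $p$ to $q$ in $\Gamma_0$ and let $\beta$ be any geodesic in $\mathscr{G}(G(\Gamma),S)$ from $f(p)$ to $f(q)$. The loop $f(\alpha)\cdot\overline{\beta}$ bounds a reduced disc diagram $D$ over the graphical presentation, whose $2$-cells correspond to immersions of simple cycles in components of $\Gamma$. The claim is that $D$ has no $2$-cell; once established, $f(\alpha)$ and $\beta$ agree as reduced edge paths, so $|\alpha|=|\beta|=d_{\mathscr{G}}(f(p),f(q))$, giving the isometric embedding, and since $\beta$ was arbitrary, every such geodesic factors through $f$, yielding convexity of $f(\Gamma_0)$.

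Suppose $D$ has a $2$-cell. The graphical Greendlinger lemma, which is the standard consequence of the $Gr'(\tfrac16)$ condition (cf.\ \cite{dG15}), produces a face $\Pi$ coming from a component $\Gamma_\Pi$ of $\Gamma$ whose exterior arc $\eta\subseteq\partial D$ satisfies $|\eta|>\tfrac12|\partial\Pi|$; the complementary arc $\eta'\subseteq\partial\Pi$ has $|\eta'|<|\eta|$ and the same endpoints in $\mathscr{G}$. The arc $\eta$ splits as $\eta_1\cup\eta_2$ with $\eta_1\subseteq f(\alpha)$ and $\eta_2\subseteq\overline{\beta}$. If $\eta=\eta_2$, the shortcut $\eta'$ immediately contradicts $\beta$ being geodesic. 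If $\eta=\eta_1$, then $\eta_1$ gives two label-preserving maps of the same labelled path into $\Gamma$ (one via $\partial\Pi\hookrightarrow\Gamma_\Pi$, one via $\alpha\hookrightarrow\Gamma_0$) of length exceeding $\tfrac12|\partial\Pi|>\tfrac16|\partial\Pi|$, which by the piece definition forces these maps to be related by a label-preserving automorphism of $\Gamma$ identifying $\Gamma_\Pi$ with $\Gamma_0$; the shortcut $\eta'$ then lifts into $\Gamma_0$ and strictly shortens $\alpha$, contradicting its geodesicity. The main obstacle is the straddling case, when both $\eta_1$ and $\eta_2$ are nontrivial: here the longer of them still has length exceeding $\tfrac14|\partial\Pi|>\tfrac16|\partial\Pi|$, so the piece/component identification above still applies on the longer side, and combining the lift of $\eta'$ with the shorter side yields a strictly shorter substitute for either $\alpha$ in $\Gamma_0$ or $\beta$ in $\mathscr{G}$, again a contradiction. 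With $D$ forced to be trivial, the conclusion follows.
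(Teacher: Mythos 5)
The paper imports this result from Gruber--Sisto \cite[Lemma 2.15]{GS18} and does not give its own proof, so I assess your argument on its own merits. The existence and uniqueness part is fine: any closed edge loop in $\Gamma_0$ is, in $\pi_1(\Gamma_0,y)$, a product of conjugates of simple cycles, and since the labels of simple cycles constitute the relator set $R$, its label is trivial in $G(\Gamma)$; thus the edge-by-edge extension is well defined, and uniqueness is forced by the reduced labelling.

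The convexity/isometry part, however, has a genuine gap. Your central claim is that $D$ has no $2$-cell, which would force $f(\alpha)=\beta$ as reduced edge paths. This is too strong and is in fact false: it would make geodesics in $\mathscr{G}(G(\Gamma),S)$ between any two points of $f(\Gamma_0)$ unique. By Strebel's classification (Theorem~\ref{thm:ClassificationForTriangle}, form $\mathrm{I}_1$), the diagram of a simple geodesic bigon is a ladder, generically with several faces. Accordingly, your straddling case does not produce a contradiction: with $\eta=\eta_1\eta_2$, $\eta_1\subset f(\alpha)$, $\eta_2\subset\overline{\beta}$ and $|\eta_1|\geq|\eta_2|$, replacing $\eta_1$ by (a lift of) $\eta'\overline{\eta_2}$ costs $|\eta'|+|\eta_2|$, and the inequalities $|\eta'|<\tfrac12|\partial\Pi|$, $|\eta_1|+|\eta_2|>\tfrac12|\partial\Pi|$ only give $|\eta_1|>\tfrac14|\partial\Pi|$, which is perfectly compatible with $|\eta'|+|\eta_2|\geq|\eta_1|$; the symmetric replacement of $\eta_2$ fails for the same reason. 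The correct route is not to eliminate $2$-cells but to show that every face $\Pi$ of the ladder lifts into $\Gamma_0$: if $a\subset f(\alpha)$ and $b\subset\overline{\beta}$ are the exterior arcs of $\partial\Pi$, then either $a$ is a piece, in which case $a$ together with the at most two interior arcs of $\Pi$ (all pieces) has total length $<\tfrac12|\partial\Pi|<|b|$, yielding a shortcut that contradicts $\beta$ geodesic; or $a$ is not a piece, so its two lifts to $\Gamma$ agree up to a label-preserving automorphism and hence all of $\partial\Pi$, in particular $b$, lifts into $\Gamma_0$. Running this over every face gives $\beta\subset f(\Gamma_0)$ without requiring $\beta=f(\alpha)$, and then $|\alpha|=|\beta|$ follows since $\beta$ also lifts to a path in $\Gamma_0$ from $p$ to $q$.
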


\begin{Definition}(Embedded component \cite[Definition 2.5]{ACGH19})
The image of the label-preserving map given by Lemma \ref{lemma:EmbedConvex} is called an \textit{embedded component} of $\Gamma$.
\end{Definition}

For each component $\Gamma_i$ of $\Gamma$, choose a basepoint $y_i\in \Gamma_i$.
Let $\Gamma_i'$ be the embedded component of $\Gamma$ which is the image of the label-preserving map $f_i:\Gamma_i\rightarrow \mathscr{G}(G(\Gamma),S)$ with $f_i(y_i)=1_{G(\Gamma)}$ (the identity element of $G(\Gamma)$).
Then an arbitrary embedded component of $\Gamma$ in $X$ is always a $G(\Gamma)$-translate of some $\Gamma_i'$.

The following observation is an consequence of the definition of pieces in $\Gamma$.

\begin{Lemma} \label{lemma:PieceIntersection}
A labelled path $p$ is a piece of $\Gamma$ if and only if $p$ can embed into the intersection of two different embedded components.
\end{Lemma}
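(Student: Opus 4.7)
The plan is to translate between the abstract notion of a piece---which involves two essentially distinct label-preserving maps $p\to\Gamma$---and its realization inside the Cayley graph, using Lemma \ref{lemma:EmbedConvex} together with the observation that $\mathscr{G}(G(\Gamma),S)$ is itself reduced $S$-labelled, so any labelled path in it is uniquely determined by its starting vertex and its label sequence.

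For the forward direction, let $\phi_1,\phi_2:p\to\Gamma$ be the essentially distinct label-preserving maps given by Definition \ref{def:piece}, let $\Gamma_{j_i}$ be the component containing $\phi_i(p)$, and set $v_i:=\phi_i(p_-)$. By Lemma \ref{lemma:EmbedConvex} there is a unique label-preserving isometric embedding $f_i:\Gamma_{j_i}\to\mathscr{G}(G(\Gamma),S)$ with $f_i(v_i)=1_{G(\Gamma)}$. Both compositions $f_i\circ\phi_i$ are labelled paths starting at $1_{G(\Gamma)}$ reading the label of $p$, so by the unique-readability property of the Cayley graph they coincide as a single labelled path $\tilde p$ lying in the intersection $A_1\cap A_2$, where $A_i:=f_i(\Gamma_{j_i})$. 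It remains to show $A_1\neq A_2$: if they were equal, then $f_2^{-1}\circ f_1$ would be a label-preserving isomorphism $\eta_0:\Gamma_{j_1}\to\Gamma_{j_2}$ sending $v_1\mapsto v_2$, which extends to a label-preserving automorphism $\eta$ of all of $\Gamma$ (as $\eta_0$ on $\Gamma_{j_1}$, as $\eta_0^{-1}$ on $\Gamma_{j_2}$ if $j_1\neq j_2$, and as the identity on every other component). Applying $f_2$ to $\eta\circ\phi_1$ and using injectivity of $f_2$ then yields $\phi_2=\eta\circ\phi_1$, contradicting essential distinctness.

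For the backward direction, suppose $p$ embeds as a labelled path $\iota:p\hookrightarrow A_1\cap A_2$ with $A_1\neq A_2$ and $A_i=f_i(\Gamma_{j_i})$. Define $\phi_i:=f_i^{-1}\circ\iota:p\to\Gamma_{j_i}\subseteq\Gamma$, which are label-preserving. If there were a label-preserving automorphism $\eta$ of $\Gamma$ with $\phi_2=\eta\circ\phi_1$, then $f_1$ and $f_2\circ\eta$ would be two label-preserving maps $\Gamma_{j_1}\to\mathscr{G}(G(\Gamma),S)$ both sending $v_1:=\phi_1(p_-)$ to $\iota(p_-)$. The uniqueness clause of Lemma \ref{lemma:EmbedConvex} would force $f_1=f_2\circ\eta$, whence $A_1=f_1(\Gamma_{j_1})=f_2(\Gamma_{j_2})=A_2$, contradicting our hypothesis. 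Hence $\phi_1,\phi_2$ are essentially distinct and $p$ is a piece.

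The only delicate point is the extension step in the forward direction: essential distinctness in Definition \ref{def:piece} is phrased in terms of automorphisms of all of $\Gamma$, whereas what falls out of $A_1=A_2$ is only a label-preserving identification of two (possibly distinct) components. Extending by the identity on the remaining components---and by swapping the two components via $\eta_0$ and $\eta_0^{-1}$ when they are distinct---is routine, but it must be made explicit to match the formal definition of piece.
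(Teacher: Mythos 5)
The paper gives no proof of this lemma, merely remarking that it is ``an consequence of the definition of pieces in $\Gamma$,'' so there is no argument to compare yours against; your proof correctly fills in the intended justification, using the uniqueness clause of Lemma \ref{lemma:EmbedConvex} together with the fact that a labelled path in $\mathscr{G}(G(\Gamma),S)$ is determined by its initial vertex and its label. Your attention to the extension step (promoting the component isomorphism $\eta_0$ to a label-preserving automorphism $\eta$ of all of $\Gamma$, handling separately the cases $j_1=j_2$ and $j_1\neq j_2$) is exactly the point that needed care, and you handle it correctly in both directions.

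One small remark, not a gap in your argument but worth flagging since the paper is silent on it: in the forward direction your $\tilde p=f_i\circ\phi_i$ is a label-preserving map of $p$ into $A_1\cap A_2$, but Definition \ref{def:piece} does not require the maps $\phi_i$ to be injective, so $\tilde p$ is a priori only a labelled path traced in the intersection, not necessarily an embedded one. This matches how Lemma \ref{lemma:PieceIntersection} is actually used in the reverse direction of Theorem \ref{thm:RelativeHyperbolicity} and in Lemma \ref{lemma:ASatisfyAlpha1}, so the word ``embed'' in the statement should be read loosely; your proof is consistent with that reading.
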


The following result is probably  well-known. However,  the author cannot find an exact reference  in the literature, so a proof is provided here for completeness.

\begin{Lemma} \label{lemma:Aut=Stab}
Let $\Gamma_0$ be a  component of a $Gr'(\frac{1}{6})$-labelled graph $\Gamma$. For any label-preserving embedding $f:\Gamma_0\hookrightarrow\mathscr{G}(G(\Gamma),S)$ provided by Lemma \ref{lemma:EmbedConvex}, the label-preserving automorphism group $\mathrm{Aut}(\Gamma_0)$ of the component $\Gamma_0$ is isomorphic to the stabilizer group $\mathrm{Stab}(f(\Gamma_0))$  in $G(\Gamma)$ of the embedded component $f(\Gamma_0)$.
\end{Lemma}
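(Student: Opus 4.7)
The plan is to construct mutually inverse group homomorphisms
\[
\Phi \colon \mathrm{Stab}(f(\Gamma_0)) \longrightarrow \mathrm{Aut}(\Gamma_0)
\quad\text{and}\quad
\Psi \colon \mathrm{Aut}(\Gamma_0) \longrightarrow \mathrm{Stab}(f(\Gamma_0)),
\]
the entire argument resting on the \emph{uniqueness} clause of Lemma \ref{lemma:EmbedConvex}. Write $X=\mathscr{G}(G(\Gamma),S)$ and $A=f(\Gamma_0)$, and fix a base vertex $y\in\Gamma_0$; after replacing $f$ by $L_{f(y)^{-1}}\circ f$ we may assume $f(y)=1_{G(\Gamma)}$, since such a replacement only conjugates $\mathrm{Stab}(A)$ and therefore preserves its isomorphism type.

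For $\Phi$, observe that the left-multiplication action of $G(\Gamma)$ on $X$ is label-preserving, so for any $g\in\mathrm{Stab}(A)$ the restriction $L_g|_A$ is a label-preserving automorphism of $A$; since $f\colon \Gamma_0\to A$ is a label-preserving bijection, we set $\Phi(g):=f^{-1}\circ L_g|_A\circ f$. For $\Psi$, given $\eta\in\mathrm{Aut}(\Gamma_0)$ the composition $f\circ\eta\colon \Gamma_0\to X$ is a label-preserving map sending $y$ to the vertex $g:=f(\eta(y))$, while $L_g\circ f\colon \Gamma_0\to X$ is likewise a label-preserving map sending $y$ to $L_g(1)=g$. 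By the uniqueness part of Lemma \ref{lemma:EmbedConvex} these two maps must coincide, yielding the key identity
\[
L_g\circ f \;=\; f\circ\eta.
\]
In particular $L_g(A)=f(\eta(\Gamma_0))=A$, so $g\in\mathrm{Stab}(A)$, and we set $\Psi(\eta):=g$.

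The key identity above immediately gives $\Phi(\Psi(\eta))=f^{-1}\circ L_g\circ f=\eta$, and conversely $\Psi(\Phi(g))=f(\Phi(g)(y))=L_g(f(y))=g$, so $\Phi$ and $\Psi$ are mutually inverse bijections. To see that $\Psi$ is a homomorphism, set $g_i=\Psi(\eta_i)$; applying the key identity twice gives $L_{g_1 g_2}\circ f=L_{g_1}\circ f\circ \eta_2=f\circ\eta_1\circ\eta_2$, and evaluating at $y$ yields $g_1 g_2=f(\eta_1\eta_2(y))=\Psi(\eta_1\eta_2)$. There is no essential obstacle in the argument; the only substantive input is the uniqueness clause in Lemma \ref{lemma:EmbedConvex}, which is what makes the construction of $\Psi$ canonical and forces it to be a two-sided inverse of the obvious conjugation map $\Phi$.
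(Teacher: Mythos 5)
Your proof is correct, and the core construction (the conjugation map $\Phi(g)=f^{-1}\circ L_g|_A\circ f$, reduction to the basepoint $f(y)=1_{G(\Gamma)}$) matches the paper's. Where you diverge is in establishing surjectivity: the paper first observes that a label-preserving automorphism of $\Gamma_0$ is determined by the image of a single base vertex (an inductive argument along the graph using reducedness of the labelling), and then runs a second induction to show that $g_h=f(h(x_0))$ actually stabilizes $f(\Gamma_0)$. You instead construct an explicit two-sided inverse $\Psi$ and deduce the key identity $L_g\circ f=f\circ\eta$ in one stroke from the \emph{uniqueness} clause of Lemma \ref{lemma:EmbedConvex}. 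This is more economical: the paper's two inductions are essentially a local re-derivation of that uniqueness statement, whereas you treat it as the black box it was designed to be. The trade-off is minor --- the paper's version makes explicit \emph{why} the uniqueness holds (reducedness), which is instructive, but your version is shorter and less error-prone, and it cleanly separates the conceptual input (uniqueness of label-preserving extensions) from the group-theoretic bookkeeping.
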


\begin{proof}
For any two different label-preserving maps $f_1,f_2:\Gamma_0\rightarrow\mathrm{Cay}(G(\Gamma),S)$ provided by Lemma \ref{lemma:EmbedConvex}, the stabilizer groups $\mathrm{Stab}(f_1(\Gamma_0)),\mathrm{Stab}(f_2(\Gamma_0))$ are isomorphic. Thus, without loss of generality, we may assume that $1_{G(\Gamma)}\in f(\Gamma_0)$. Let $x_0$ be the vertex of $\Gamma_0$ such that $f(x_0)=1_{G(\Gamma)}$.

It is obvious that $f:\Gamma_0\rightarrow f(\Gamma_0)$ is a label-preserving isomorphism. This implies  a natural monomorphism:
\begin{align*}
\Phi: \mathrm{Stab}(f(\Gamma_0)) &\hookrightarrow \mathrm{Aut}(\Gamma_0) \\
g &\mapsto \Phi(g)= f^{-1}gf.
\end{align*}

Observe first that for any $h\in \mathrm{Aut}(\Gamma_0)$, $h$ is determined by $h(x_0)$.
Since $\Gamma$ is reduced, every two edges originated from (resp. terminated at) $h(x_0)$ have different labels. Hence an edge originated from $h(x_0)$ and labelled by $s\in S$ must be the image under $h$ of the unique edge originated from $x_0$ and labelled also by $s$. The same holds for edges terminated at $h(x_0)$. Since $\Gamma$ is connected, the observation follows from induction.

We now prove that $\Phi$ is surjective. If $g_h=f(h(x_0))$ is the corresponding vertex in the embedded component   $f(\Gamma_0)$ in $\mathscr{G}(G(\Gamma),S)$, then $g_h=g_h(1_{G(\Gamma)})$ translates  the edges adjacent to $1_{G(\Gamma)}$   in $f(\Gamma_0)$ to the ones adjacent  to the vertex $g_h$ {which must be contained in $f(\Gamma_0)$ as well}.
By induction again, we know that $g_h$ stabilizes $f(\Gamma_0)$, and thus $g_h\in\mathrm{Stab}(f(\Gamma_0))$. It is easy to see that $\Phi(g_h)=h$, so $\Phi$ is an isomorphism.
\end{proof}

\subsection{Combinatorial polygons}

Diagram is one of the main tools in small cancellation theory. In this subsection, we introduce several relevant notions for future discussions.  

A (singular disk) \textit{diagram} is a finite, contractible, $2$-dimensional CW-complex embedded in $\mathbb{R}^2$. It is $S$-\textit{labelled} if its $1$-skeleton, when treated as graph, is endowed with a labelling by $S$. It is a diagram \textit{over} the presentation $\langle S|R\rangle$ if it is $S$-labelled and the word read on the boundary cycle of each $2$-cell belongs to $R$.

A diagram is \textit{simple} if it is homeomorphic to a disc. We call $1$-cells \textit{edges} and $2$-cells \textit{faces}.

If $D$ is a diagram over $\langle S|R\rangle$, then for any choice of base vertices $x\in \mathscr{G}(G(\Gamma),S)$ and $y\in D$, there exists a unique label-preserving map $g$ from $1$-skeleton of $D$ to $\mathscr{G}(G(\Gamma),S)$ with $g(y)=x$. The map $g$ need not be an immersion in general. In the following, we sometimes confuse a diagram with its image in $\mathscr{G}(G(\Gamma),S)$ to simplify notations, however, it is easy to distinguish them from context.

An \textit{arc} in a diagram $D$ is an embedded edge path whose interior vertices have valence $2$ and whose initial and terminal vertices have valence different from $2$. An arc is \textit{exterior} if it is contained in the boundary of $D$ otherwise it is \textit{interior}. A face of a diagram $D$ is called \textit{interior} if its boundary arcs are all interior arcs, otherwise it is called a \textit{boundary face}. A \textit{$(3,7)$-diagram} is a diagram such that every interior vertices have valence at least $3$ and the boundary of every interior face is consisting of at least $7$ arcs.

If $\Pi$ is a face in a diagram, its \textit{interior degree} $i(\Pi)$ is the number of interior arcs in its boundary $\partial\Pi$, and its \textit{exterior degree} $e(\Pi)$ is the number of exterior arcs in $\partial\Pi$.

\begin{Definition}(combinatorial polygon \cite[Definition 2.11]{GS18})
A \textit{combinatorial $n$-gon} $(D,(\gamma_i))$ is a $(3,7)$-diagram $D$
with a decomposition of $\partial D$ into $n$ reduced subpaths (called \textit{sides}) $\partial D=\gamma_0\gamma_1\cdots\gamma_{n-1}$, such that every boundary face $\Pi$ with $e(\Pi)=1$ for which the exterior arc in $\partial\Pi$ is contained in one of the $\gamma_i$ satisfies $i(\Pi)\geq 4$.
\end{Definition}

A valence $2$ vertex that belongs to two sides is called a \textit{distinguished vertex}.
A face who contains a distinguished vertex is called a \textit{distinguished face}.

We adopt the convention in \cite{ACGH19} that the ordering of the sides of a combinatorial $n$-gon is considered up to cyclic permutation, with subscripts modulo $n$. By convention, $2$-gons, $3$-gons, and $4$-gons will be called bigons, triangles, and quadrangles respectively.

Now we return to the setting of $Gr'(\frac{1}{6})$-labelled graph $\Gamma$.
The following lemma is useful in diagrams over graphical small cancellation presentations.

\begin{Lemma} \cite[Lemma 2.13]{dG15} \label{Lemma:diagram}
Let $\Gamma$ be a $Gr'(\frac{1}{6})$-labelled graph. If $w\in F(S)$ satisfy $w=1_{G(\Gamma)}$ in $G(\Gamma)$, then, there exists a diagram $D$ over $\langle S|R\rangle$ such that $\partial D$ is labelled by $w$ and every interior arc of $D$ is a piece.
\end{Lemma}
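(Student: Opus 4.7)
The approach is to apply the classical van Kampen lemma to obtain \emph{some} diagram over $\langle S\mid R\rangle$ for $w$ and then to minimize complexity over all such diagrams, arguing that a minimizer must have every interior arc be a piece. Since $w=1_{G(\Gamma)}$, there exists a finite, simply-connected, planar $S$-labelled diagram $D_0$ over $\langle S\mid R\rangle$ whose boundary word is $w$. I fix such a diagram $D$ minimizing lexicographically the pair $(N(D), E(D))$, where $N(D)$ is the number of $2$-cells and $E(D)$ the number of edges.

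Suppose, for contradiction, that $D$ contains an interior arc $\alpha$ that is not a piece. Then $\alpha$ lies on the boundary of two $2$-cells $\Pi_1, \Pi_2$ (a priori possibly equal); reading along $\partial\Pi_i$ furnishes a label-preserving map $\phi_i\colon\alpha\to\Gamma$ whose image lies on the simple cycle $\mathcal{O}_i\subset\Gamma$ carrying the boundary label of $\Pi_i$. By Definition~\ref{def:piece}, the failure of $\alpha$ to be a piece means $\phi_1$ and $\phi_2$ are not essentially distinct, so there is a label-preserving automorphism $\eta$ of $\Gamma$ with $\phi_2=\eta\circ\phi_1$. Because $\eta$ carries simple cycles to simple cycles, $\eta(\mathcal{O}_1)=\mathcal{O}_2$; in particular the boundary cycles of $\Pi_1$ and $\Pi_2$ carry identical labels once their starting points are aligned via the endpoints of $\alpha$.

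I would then perform the standard graphical small-cancellation reduction. When $\Pi_1\neq\Pi_2$, cut $D$ along $\alpha$ and fold $\partial\Pi_1$ onto $\partial\Pi_2$ through the identification induced by $\eta$; using reducedness of the labelling of $\Gamma$ to guarantee that adjacent edges with matching labels truly coincide, the two cells can be collapsed and removed, leaving an $S$-labelled planar simply-connected diagram over $\langle S\mid R\rangle$ whose boundary word still represents $w$ (after folding away any newly-created trivial spikes) and whose cell count is strictly smaller, contradicting minimality. The case $\Pi_1=\Pi_2$ is handled analogously: the self-identification splits $D$ into two subdisks of strictly smaller complexity whose boundary words multiply to $w$.

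The principal technical obstacle is making the reduction step airtight: one must check that the folding preserves planarity and simple-connectedness, that removal of coincident faces and of any subsequent trivial boundary spikes yields a diagram whose boundary word is still a valid representative of $w$ as required, and that both the $\Pi_1\neq\Pi_2$ and $\Pi_1=\Pi_2$ cases strictly decrease $(N(D), E(D))$. These are the standard but slightly delicate combinatorial checks on planar labelled diagrams; they rely crucially on reducedness of the $S$-labelling of $\Gamma$ and on a careful treatment of configurations in which $\Pi_1$ and $\Pi_2$ share more of their boundary than just the arc $\alpha$.
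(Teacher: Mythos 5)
The paper cites this as an external result (Gruber's Lemma 2.13 in \cite{dG15}) and supplies no proof, so I am comparing your attempt against the standard argument. Your overall strategy (van Kampen diagram, minimise complexity, derive a contradiction from a non-piece interior arc) is indeed the one used; but there is a genuine gap in the reduction step. You assert that ``because $\eta$ carries simple cycles to simple cycles, $\eta(\mathcal{O}_1)=\mathcal{O}_2$.'' This does not follow: $\eta(\mathcal{O}_1)$ is \emph{some} simple cycle containing $\phi_2(\alpha)$, and $\mathcal{O}_2$ is another simple cycle containing $\phi_2(\alpha)$, but nothing forces them to coincide. A concrete counterexample: let $\Gamma$ be a $\theta$-graph with arcs $a,b,c$ between two vertices, labelled so that $\mathrm{Aut}(\Gamma)$ is trivial. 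The arc $a$ is not a piece (it has a unique label-preserving embedding), yet it lies on the two distinct simple cycles $a\bar b$ and $a\bar c$. In a diagram where $\Pi_1,\Pi_2$ carry these two relators and share $\alpha$ labelled like $a$, one has $\eta=\mathrm{id}$ and $\eta(\mathcal{O}_1)=a\bar b\neq a\bar c=\mathcal{O}_2$. Your ``fold $\partial\Pi_1$ onto $\partial\Pi_2$ and collapse'' step therefore fails: the two boundary labels are \emph{not} mirror images, and no classical $r\cdot r^{-1}$ cancellation occurs.

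The correct reduction in the graphical setting is subtler than cancellation. Delete the interior edges of $\alpha$, merging $\Pi_1$ and $\Pi_2$ into a single region. Using the lifts $\psi_i\colon\partial\Pi_i\to\mathcal{O}_i$ and replacing $\psi_1$ by $\eta\circ\psi_1$, the merged boundary lifts to a closed walk contained in the subgraph $\eta(\mathcal{O}_1)\cup\mathcal{O}_2\subseteq\Gamma$. After folding away backtracks (which identifies edges of $\gamma_1$ with edges of $\gamma_2$ whenever their lifts agree, using reducedness of the labelling), the merged region can be refilled with $2$-cells whose boundaries are simple cycles of this subgraph. In the $\theta$-graph example above, the merged boundary lifts to the third simple cycle $b\bar c$, so the merged region becomes a single $2$-cell: one face and $|\alpha|$ edges fewer. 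In general the edge count strictly decreases, contradicting minimality. Your proposal identifies planarity and shared-boundary configurations as the ``principal technical obstacle,'' but the real obstacle — that the two cells need not cancel, only re-organise — is the one you would need to address.
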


A geodesic $n$-gon $P$ in $\mathscr{G}(G(\Gamma),S)$ is a cycle path that is a concatenation of $n$ geodesic segments $\gamma_0',\cdots,\gamma_{n-1}'$, which are called \textit{sides} of $P$.

The numbers appeared in the definition of combinatorial polygons are motivated by the following:

\begin{Lemma}\cite[Proposition 3.5]{ACGH19} \label{Lemma:CombPolygon}
Let $P=\gamma_0'\cdots\gamma_{n-1}'$ be a geodesic $n$-gon in $\mathscr{G}(G(\Gamma),S)$. Then there exists a diagram $D$ over $\langle S|R\rangle$ whose interior arcs are pieces such that:
\begin{itemize}
\item $\partial D=\gamma_0\cdots\gamma_{n-1}$ and the word reads on $\gamma_i$ is the same as the one reads on $\gamma_i'$ for $0\leq i\leq n-1$,
\item $D$ is a combinatorial $n$-gon after forgetting interior vertices of valence $2$.
\end{itemize}
\end{Lemma}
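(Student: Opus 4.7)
The plan is to apply Lemma~\ref{Lemma:diagram} to the closed loop $\partial P = \gamma_0'\cdots\gamma_{n-1}'$ and then verify, via length estimates combining the $Gr'(\tfrac{1}{6})$ condition with the fact that each side $\gamma_i'$ is a geodesic, that the resulting diagram fulfils the combinatorial $n$-gon axioms once the interior valence-$2$ vertices are forgotten.

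First I would observe that the word $w\in F(S)$ read along $\partial P$ represents the identity in $G(\Gamma)$, so Lemma~\ref{Lemma:diagram} produces a diagram $D$ over $\langle S\mid R\rangle$ with $\partial D$ labelled by $w$ and every interior arc a piece. Decomposing $\partial D = \gamma_0\cdots\gamma_{n-1}$ according to the factorization of $w$ induced by the sides of $P$ yields the first bullet verbatim; the preimages of the corners of $P$ become the distinguished vertices, and reducedness of each $\gamma_i$ is inherited from that of the geodesic word on $\gamma_i'$. Forgetting interior valence-$2$ vertices leaves the set of arcs and of faces unchanged while ensuring that every surviving interior vertex has valence at least $3$.

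Next I would verify the two face conditions. For an interior face $\Pi$, one has $e(\Pi)=0$ and $\partial\Pi$ is a concatenation of $i(\Pi)$ pieces whose common label is a cyclic relator of length $|\partial\Pi|$; by the $Gr'(\tfrac{1}{6})$ hypothesis each piece has length strictly less than $|\partial\Pi|/6$, so summing gives $|\partial\Pi|<\tfrac{i(\Pi)}{6}|\partial\Pi|$, forcing $i(\Pi)\geq 7$. For a boundary face $\Pi$ with $e(\Pi)=1$ whose unique exterior arc $\alpha$ lies in some side $\gamma_i$, I would suppose $i(\Pi)\leq 3$ and derive a contradiction: the $i(\Pi)$ interior arcs in $\partial\Pi$ form a path $\beta$ with the same endpoints as $\alpha$ satisfying $|\beta|<\tfrac{i(\Pi)}{6}|\partial\Pi|\leq |\partial\Pi|/2$, while $|\alpha|=|\partial\Pi|-|\beta|>|\partial\Pi|/2$. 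The image of $\beta$ in $\mathscr{G}(G(\Gamma),S)$ then joins the same two points as the corresponding subsegment of the geodesic $\gamma_i'$ by a path strictly shorter than $|\alpha|$, contradicting geodesicity. Hence $i(\Pi)\geq 4$, which finishes the combinatorial $n$-gon check.

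The main obstacle is not the arithmetic but the choice of the initial diagram: a generic van Kampen diagram for $w$ need not have interior arcs that are pieces, and without that property the $\tfrac{1}{6}$ estimates above collapse. The whole argument therefore hinges on invoking Lemma~\ref{Lemma:diagram} rather than a routine diagram construction, together with the mild observation that erasing interior valence-$2$ vertices preserves both the arc decomposition and all interior- and exterior-degree counts.
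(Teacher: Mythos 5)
Your argument is correct and follows the same strategy as the cited result \cite[Proposition 3.5]{ACGH19}, which the paper invokes without reproving: apply Lemma~\ref{Lemma:diagram} to obtain a diagram whose interior arcs are pieces, forget interior valence-$2$ vertices, and then deduce the $(3,7)$-conditions and the boundary-face bound $i(\Pi)\geq 4$ from the $Gr'(\frac{1}{6})$ hypothesis together with the geodesicity of the sides. The one point left tacit is that an interior vertex of a diagram over a presentation with cyclically reduced relators cannot have valence~$1$, so after forgetting interior valence-$2$ vertices every remaining interior vertex does indeed have valence at least~$3$, as your claim requires.
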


\subsubsection{Combinatorial bigons and triangles}

In the remainder of this subsection, our discussions about diagrams are always combinatorial, not necessarily over presentations.

\begin{Theorem}(Strebel's classification, \cite[Theorem 43]{S90}). \label{thm:ClassificationForTriangle}
Let $D$ be a simple diagram.
\begin{itemize}
\item If $D$ is a combinatorial bigon, then $D$ has the form $\mathrm{I}_1$ as depicted in Figure \ref{figure:classification-for-triangle}.
\item If $D$ is a combinatorial triangle\footnote{In our definition of combinatorial polygon, the form $\mathrm{III}_2$ of Strebel's classification can not appear.},
    then $D$ has one of the forms $\mathrm{I}_2$, $\mathrm{I}_3$, $\mathrm{II}$, $\mathrm{III}$, $\mathrm{IV}$, or $\mathrm{V}$ as depicted in Figure \ref{figure:classification-for-triangle}. 
\end{itemize}
\end{Theorem}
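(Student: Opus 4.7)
The plan is to use the combinatorial Gauss--Bonnet formula on the simply connected planar $2$-complex underlying $D$. Since $\chi(D)=1$, one can assign curvatures to vertices and faces so that the total curvature is $2\pi$: each interior vertex of valence $d\geq 3$ carries non-positive curvature, and each interior face whose boundary consists of $\geq 7$ arcs also carries non-positive curvature after an appropriate weighted assignment. Positive curvature is therefore confined to boundary vertices and boundary faces, and once the corner contributions are absorbed it must be concentrated at the distinguished vertices of $\partial D$.

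For the bigon there are two distinguished vertices, providing a fixed positive curvature budget. The constraint $i(\Pi)\geq 4$ on any boundary face $\Pi$ with $e(\Pi)=1$, together with the $(3,7)$-condition, then forces every face of $D$ to be a boundary face touching both sides, with $e(\Pi)=2$ and $i(\Pi)\leq 1$. Consecutive faces share exactly one interior arc, and a straightforward inductive argument walking along $\partial D$ shows that $D$ is a linear chain of such faces, which is precisely form $\mathrm{I}_1$.

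For the triangle the same curvature estimate bounds the number of boundary faces of low interior degree and shows that there can be at most one interior face, with very restricted shape. I would then run a case analysis based on how many sides a single face may touch (yielding the degenerate forms $\mathrm{I}_2$ and $\mathrm{I}_3$), and on whether the three chains of boundary faces meet at a common central face or at a common interior vertex (yielding forms $\mathrm{II}$, $\mathrm{III}$, $\mathrm{IV}$, $\mathrm{V}$). In each configuration the constraints $i(\Pi)\geq 4$ for distinguished faces and $d(v)\geq 3$ for interior vertices pin down the precise combinatorial type.

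The main obstacle is the exhaustive verification that no other configuration can arise; this is where the argument becomes delicate. One must show by sharp local curvature book-keeping that an interior face with boundary of length $\geq 7$ would introduce a deficit exceeding the positive budget available at three distinguished vertices, hence cannot exist; likewise any interior vertex of valence $\geq 4$ forces additional restrictions via the same inequality. I expect the bigon case to fall out in a few lines, while the triangle classification requires careful inspection of the boundary chains and their shared pivots, which is essentially the content of Strebel's original proof.
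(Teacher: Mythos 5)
The paper does not prove this statement: it is Theorem~43 of Strebel's appendix \cite{S90}, imported as a black box. The only comment the paper adds is the footnote noting that its extra condition $i(\Pi)\geq 4$, imposed on boundary faces with $e(\Pi)=1$ whose exterior arc lies inside a single side, eliminates Strebel's form $\mathrm{III}_2$. So there is no proof here against which to compare your argument; what can be said is whether your sketch would constitute one.

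Your Gauss--Bonnet framework is the right one and is close in spirit to Strebel's combinatorial counting, but the sketch contains a concrete error and, more seriously, defers the actual content of the theorem. In the bigon case you claim the curvature budget forces every face to satisfy $e(\Pi)=2$ and $i(\Pi)\leq 1$; this is false for the middle faces of a chain of length at least three in form $\mathrm{I}_1$, each of which has two interior arcs (one shared with each neighbour). Curvature bounds the total positive contribution along the boundary, not the interior degree of each individual face; getting the chain structure requires a separate argument, e.g.\ showing that each interior arc of a combinatorial bigon separates $D$ into two combinatorial bigons with fewer faces and inducting, or equivalently showing that there can be no interior face and no interior vertex, after which the chain is forced. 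For the triangle you write that one "would then run a case analysis" whose verification "is essentially the content of Strebel's original proof" -- but that exhaustive case analysis \emph{is} the theorem. Asserting that sharp local curvature book-keeping will rule out interior faces and pin down the six shapes is a plan, not a proof; the delicate step you flag as the main obstacle is exactly the part that must be supplied, and it is not. As written, the proposal gestures at the correct strategy but does not establish the classification.
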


\vspace{-1.7em}
\setlength{\unitlength}{3.5cm}
\begin{picture}(1,1)
\qbezier(0, 0.4)(0.5,0.9)(1, 0.4)
\qbezier(0, 0.4)(0.5,-0.1)(1, 0.4)

\put(0.35,0.63){\line(0,-1){0.46}}
\put(0.65,0.63){\line(0,-1){0.46}}
\put(0.45,0.4){$\cdots$}

\put(0,0.7){$\mathrm{I}_1$}


\put(1.2,0){\line(1,0){1}}
\put(1.2,0){\line(2,3){0.5}}
\put(2.2,0){\line(-2,3){0.5}}

\put(1.5,0.45){\line(1,0){0.4}}
\put(1.4,0.3){\line(1,0){0.6}}
\put(1.685,0.34){$\vdots$}

\put(1.2,0.7){$\mathrm{I}_2$}

\put(2.4,0){\line(1,0){1}}
\put(2.4,0){\line(2,3){0.5}}
\put(3.4,0){\line(-2,3){0.5}}

\put(2.6,0){\line(-2,3){0.1}}
\put(2.8,0){\line(-2,3){0.2}}
\put(2.54,0.1){$\cdots$}

\put(3,0){\line(2,3){0.2}}
\put(3.2,0){\line(2,3){0.1}}
\put(3.13,0.1){$\cdots$}

\put(2.4,0.7){$\mathrm{I}_3$}
\end{picture}

\vspace{-1.3em}
\setlength{\unitlength}{3.3cm}
\begin{picture}(1,1) \label{figure:classification-for-triangle}
\put(0,0){\line(1,0){1}}
\put(0,0){\line(2,3){0.5}}
\put(1,0){\line(-2,3){0.5}}

\put(0.2,0){\line(-2,3){0.1}}
\put(0.4,0){\line(-2,3){0.2}}
\put(0.14,0.1){$\cdots$}

\put(0.6,0){\line(2,3){0.2}}
\put(0.8,0){\line(2,3){0.1}}
\put(0.73,0.1){$\cdots$}

\put(0.3,0.45){\line(1,0){0.4}}
\put(0.4,0.6){\line(1,0){0.2}}
\put(0.485,0.49){$\vdots$}

\put(0,0.7){$\mathrm{II}$}

\put(1.2,0){\line(1,0){1}}
\put(1.2,0){\line(2,3){0.5}}
\put(2.2,0){\line(-2,3){0.5}}

\put(1.5,0){\line(-2,3){0.15}}
\put(1.7,0){\line(-2,3){0.25}}
\put(1.43,0.1){$\cdots$}

\put(1.7,0){\line(2,3){0.25}}
\put(1.9,0){\line(2,3){0.15}}
\put(1.84,0.1){$\cdots$}

\put(1.5,0.45){\line(1,0){0.4}}
\put(1.6,0.6){\line(1,0){0.2}}
\put(1.685,0.49){$\vdots$}

\put(1.2,0.7){$\mathrm{III}$}

\put(2.4,0){\line(1,0){1}}
\put(2.4,0){\line(2,3){0.5}}
\put(3.4,0){\line(-2,3){0.5}}

\put(2.6,0){\line(-2,3){0.1}}
\put(2.8,0){\line(-2,3){0.2}}
\put(2.54,0.1){$\cdots$}

\put(3,0){\line(2,3){0.2}}
\put(3.2,0){\line(2,3){0.1}}
\put(3.13,0.1){$\cdots$}

\put(2.7,0.45){\line(1,0){0.4}}
\put(2.8,0.6){\line(1,0){0.2}}
\put(2.885,0.49){$\vdots$}

\put(2.9,0.225){\line(0,1){0.225}}
\put(2.9,0.225){\line(2,-1){0.185}}
\put(2.9,0.225){\line(-2,-1){0.185}}

\put(2.4,0.7){$\mathrm{IV}$}

\put(3.6,0){\line(1,0){1}}
\put(3.6,0){\line(2,3){0.5}}
\put(4.6,0){\line(-2,3){0.5}}

\put(3.8,0){\line(-2,3){0.1}}
\put(4,0){\line(-2,3){0.2}}
\put(3.74,0.1){$\cdots$}

\put(4.2,0){\line(2,3){0.2}}
\put(4.4,0){\line(2,3){0.1}}
\put(4.33,0.1){$\cdots$}

\put(3.9,0.45){\line(1,0){0.4}}
\put(4,0.6){\line(1,0){0.2}}
\put(4.085,0.49){$\vdots$}

\put(4.1,0.225){\line(0,-1){0.225}}
\put(4.1,0.225){\line(2,1){0.26}}
\put(4.1,0.225){\line(-2,1){0.26}}

\put(3.6,0.7){$\mathrm{V}$}

\put(0.3,-0.2){F\textsc{igure} 2.2.1.~ Strebel's classification of combinatorial bigons and triangles.}
\end{picture}
\vspace{2em}

\subsubsection{Special combinatorial quadrangles}

In this subsection we introduce some notions from \cite{ACGH19}.

A combinatorial $n$-gon $(D,(\gamma_i))$ is \textit{degenerate} if there exists $i$, such that $$(D, \gamma_0, \cdots,\gamma_i\gamma_{i+1}, \cdots, \gamma_{n-1})$$ is a combinatorial $(n-1)$-gon. In this case the terminal vertex of $\gamma_i$ is called a \textit{degenerate vertex}.

\begin{Definition}\cite[Definition 3.12]{ACGH19} 
A combinatorial $n$-gon $D$ is \textit{reducible} if it admits a vertex or face reduction as defined below, otherwise it is \textit{irreducible}.
\end{Definition}

Let $(D,(\gamma_i))$ be a combinatorial $n$-gon. We denote by $\Pi^\circ$ (resp. $e^\circ$) the interior of a face $\Pi$ (resp. an edge $e$).

\noindent\textbf{Vertex reduction.}
If  $v\in D$ is a cut vertex such that $v$ is contained in exactly two boundary faces, then the closures of the two  components of  $D\setminus v$  denoted by $D', D''$ respectively are then subcomplexes of $D$  in a natural way. The \textit{vertex reduction of $(D,(\gamma_i))$ at $v$}  gives two combinatorial polygons $D',D''$ whose sides are either these $\gamma_i$  or subsegments of $\gamma_i$ cut out by $v$.

\noindent\textbf{Face reduction.}
Suppose $\Pi\subseteq D$ is a separating boundary face with  two boundary arcs $e,e'\subseteq \partial\Pi$   such that $D\setminus(\Pi^\circ\cup e^\circ\cup (e')^\circ)$ has exactly two components, and each contains a distinguished vertex. The \textit{face reduction of  $(D,(\gamma_i))$ at $(\Pi,e,e')$} produces two combinatorial polygons obtained by collapsing a simple path connecting $e$ and $e'$ whose interior contained in $\Pi^\circ$ to a vertex, and then performing vertex reduction at the resulting vertex.

The inverse operation of vertex (resp. face) reduction is called \textit{vertex (resp. face) combination}. Precise definitions see \cite{ACGH19}.

\vspace{.5em}
It is easy to see the definition of reducibility in \cite{ACGH19} is equivalent to that here.

A simple combinatorial $n$-gon, for $n>2$, is \textit{special} if it is non-degenerate, irreducible, and every non-distinguished vertex has valence $3$.

The classification of special combinatorial quadrangles provided by G. Arzhantseva, C. Cashen, D. Gruber, and D. Hume gives us the following useful observation.

\begin{Lemma} \label{lemma:SpecialCombGeodQuadrangles}
Let $D$ be a simple, non-degenerate, irreducible combinatorial quadrangle, then there exist two opposite sides which can be connected by a path consisting of at most $6$ interior arcs.
\end{Lemma}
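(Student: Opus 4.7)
The plan is to appeal to the classification of special combinatorial quadrangles obtained by Arzhantseva--Cashen--Gruber--Hume in~\cite{ACGH19}, after first reducing the general simple, non-degenerate, irreducible case to the special one, where every non-distinguished vertex has valence exactly $3$. Given a non-distinguished vertex $v$ of $D$ of valence at least $4$, the idea is to resolve $v$ into a short chain of valence-$3$ vertices joined by auxiliary arcs placed in a small neighborhood of $v$. With a careful insertion the resulting diagram $D'$ is still a simple, non-degenerate, irreducible combinatorial quadrangle with the same four sides as $D$ and still satisfies the $(3,7)$ condition. Iterating over all high-valence non-distinguished vertices yields a special combinatorial quadrangle $D''$ equipped with a natural collapse map $D''\to D$ contracting the auxiliary arcs, under which any path of at most $6$ interior arcs between two opposite sides of $D''$ maps to a path of at most $6$ interior arcs between the corresponding opposite sides of $D$.

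Once one is reduced to the special case, the enumeration in~\cite{ACGH19} produces a short list of possible shapes of a special combinatorial quadrangle. In each case the interior-arc pattern is constrained enough that a direct inspection exhibits a pair of opposite sides connected by a path crossing no more than $6$ interior arcs, the bound being sharp on the most complex diagram in the list. Combined with the reduction, this gives the stated conclusion for arbitrary~$D$.

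The main obstacle is the reduction step: when splitting a high-valence non-distinguished vertex, one must ensure that the auxiliary arcs are placed so that no new interior face with fewer than $7$ boundary arcs appears, and that no new vertex or face reduction becomes available in $D''$; otherwise irreducibility would be lost and the classification could not be invoked. This can be handled by inserting the auxiliary arcs inside a single face incident to $v$, so that the new arcs only locally refine the existing structure and never separate the distinguished vertices in a way not already present in $D$. After this local verification, the lemma is a direct read-off from the ACGH classification.
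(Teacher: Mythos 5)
Your proposal follows essentially the same route as the paper: blow up the high-valence non-distinguished vertices to obtain a special combinatorial quadrangle (this is exactly the construction in the proof of \cite[Proposition 3.20]{ACGH19}, which the paper cites rather than re-verifies), invoke the classification of special combinatorial quadrangles \cite[Theorem 3.18]{ACGH19} to find a path of at most $6$ interior arcs between opposite sides, and pull that path back to $D$ under the collapse/blow-up correspondence. The argument is correct and matches the paper's proof.
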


\begin{proof}
The blowing up operations in the proof of \cite[Proposition 3.20]{ACGH19} produce a special combinatorial quadrangle $D'$ associated with $D$ which has the following properties:
\begin{itemize}
\item diagram $D'$ is the image of $D$ under the compositions of finitely many blow-up vertex maps,
\item the boundary of $D'$ is the same as $D$, i.e., those blowing up of the compositions restricted to $\partial D$ are identities,
\item compared with $D$, $D'$ only has more interior edges. 
\end{itemize}

By the classification of special combinatorial quadrangles
\cite[Theorem 3.18]{ACGH19}, we know that there exist two opposite sides of $D'$ and a path $p'$ connecting them which is a concatenation of at most $6$ interior arcs.
Then there exists a path $p\subset D$ such that the image of $p$ under the compositions of blow-up maps above is $p'$,
in other words, $p'$ is obtained via a series blowing up vertex operations from $p$. Therefore, $p$ connects two opposite sides of $D$ corresponding to the two opposite sides of $D'$ connected by $p'$, and $p$ consists of interiors arcs no more than the number of those of $p'$. Hence $p$ is a concatenation of at most $6$ interior arcs of $D$.
\end{proof}

\subsection{Asymptotic cones of a metric space} \label{subsection:AsymptoticCones}

Asymptotic cone was first essentially used by Gromov in \cite{G81} and then formally introduced by van den Dries and Wilkie in \cite{dDW84}.

Recall that a \textit{non-principal ultrafilter} is a finitely additive measure $\omega$ on the set of all subsets of $\mathbb{N}$ (or, more generally, of a countable set) such that every subset has measure either $0$ or $1$ and all finite subsets have measure $0$. Throughout the paper all ultrafilters are non-principal.

Let $A_n$ and $B_n$ be two sequences of objects and let $\mathcal{R}$ be a relation between $A_n$ and $B_n$ for every $n$. We write $A_n\mathcal{R}_\omega B_n$ if $A_n\mathcal{R} B_n$ $\omega$-almost surely, that is,
\[\omega(\{n\in\mathbb{N}|A_n\mathcal{R} B_n\})=1\]
For example, $\in_\omega,=_\omega,<_\omega,\subseteq_\omega$.

Given a space $X$, the \textit{ultrapower} $X^\omega$ of $X$ is the quotient $X^{\mathbb{N}}/\approx$, where $(x_n)\approx(y_n)$ if $x_n=_\omega y_n$.

Let $(X,d)$ be a metric space, $\omega$ an ultrafilter over $\mathbb{N}$, $(e_n)$ an element in $X^{\omega}$, and $(l_n)$ a sequence of numbers with $\lim_\omega l_n=+\infty$.
Define
\[X_e^\omega=\{(x_n)\in X^{\mathbb{N}}| \lim_\omega\frac{d(x_n,e_n)}{l_n}<+\infty \}.\]
Define an equivalence relation $\sim$ on $X_e^\omega$:
\[(x_n)\sim(y_n)\Leftrightarrow \lim_\omega \frac{d(x_n,y_n)}{l_n}=0.\]

\begin{Definition}
The quotient $X_e^\omega/\sim$, denoted by $\mathrm{Con}_{\omega}(X; (e_n),(l_n))$, is called the \textit{asymptotic cone} of $X$ with respect to the ultrafilter $\omega$, the scaling constants $(l_n)$ and the observation point $(e_n)$. It admits a natural metric $d_\omega$ defined as following:
\[d_\omega((x_n),(y_n))=\lim_\omega\frac{d(x_n,y_n)}{l_n}.\]
\end{Definition}

For a sequence $(A_n)$ of subsets in $X$, its $\omega$-\textit{limit} in $\mathrm{Con}_{\omega}(X; (e_n),(l_n))$ is defined by
\[\lim_\omega A_n=\{\lim_\omega a_n|a_n\in_\omega A_n\}.\]
Obviously, if $\lim_\omega\frac{d(e_n,A_n)}{l_n}=+\infty$, then $\lim_\omega A_n$ is empty. It is easy to verify that every limit set $\lim_\omega A_n$ is closed.

Let $\alpha_n$ be a sequence of geodesics with length of order $O(l_n)$. Then the $\omega$-limit $\lim_\omega\alpha_n$ in $\mathrm{Con}_{\omega}(X; e,d)$ is either empty or a geodesic. If $\lim_\omega \alpha_n$ is a geodesic in $\mathrm{Con}_{\omega}(X; e,d)$, then we call it a \textit{limit geodesic}. Therefore, any asymptotic cone of a geodesic metric space is also a geodesic metric space.

Not every geodesic in $\mathrm{Con}_{\omega}(X; e,d)$ is a limit geodesic, not even in particular that $X$ is a Cayley graph of a finitely generated group (see \cite{D09} for a counterexample).

\subsection{Tree-graded metric spaces}

Let $(X,d)$ be a geodesic metric space and let $\mathcal{A}=\{A_\lambda|\lambda\in \Lambda\}$ be a collection of subsets in $X$. In every asymptotic cone $\mathrm{Con}_\omega(X;(e_n),(l_n))$, we consider the collection $\mathcal{A}_\omega$ of limit subsets
\[\{\lim_\omega A_{\lambda_n}|(\lambda_n)\in \Lambda^\omega,s.t., \lim_\omega\frac{d(e_n,A_{\lambda_n})}{l_n}<+\infty\}.\]

\begin{Definition} \cite{DS05}
The metric space $X$ is asymptotically tree-graded with respect to $\mathcal{A}$ if every asymptotic cone $\mathrm{Con}_\omega(X;(e_n),(l_n))$ of $X$ is tree-graded with respect to $\mathcal{A}_\omega$.
\end{Definition}

The notion of an asymptotically tree-graded metric space is equivalent to a list of geometric conditions (see \cite[Theorem 4.1 and Remark 4.2 (3)]{DS05}). Now we introduce some related geometric notions and results.

\begin{Definition}[fat polygons] \label{def:FatPolygons}
Let $n\ge 1$ be an integer.  A geodesic $n$-gon $P$ is $\theta$-fat if the distance between any two non-adjacent edges is greater than $\theta$.
\end{Definition}

Our notion of ``fat polygons" is weaker than the notion in \cite{D09} (cf. \cite[Lemma 4.6]{D09}). However the weak version is enough for our purpose.
The method of \cite[Proposition 4.14]{D09} can also apply to bigons, therefore:
\begin{Lemma}\label{lemma:FatQuadrangles}
Let $\theta>0$. In any asymptotic cone $\mathrm{Con}_{\omega}(X; (e_n),(l_n))$ of a geodesic metric space $(X,d)$,
every simple non-trivial bigon with edges limit geodesics is the limit set $\lim_\omega Q_n$ of a sequence $(Q_n)$ of simple geodesic quadrangles that are $\theta$-fat $\omega$-almost surely.
\end{Lemma}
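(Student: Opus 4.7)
The plan is to adapt the triangle construction of Drutu \cite[Proposition 4.14]{D09} to the bigon setting. Let $P$ be a simple non-trivial bigon in the asymptotic cone with vertices $u, v$ and sides being limit geodesics $\alpha = \lim_\omega \alpha_n$ and $\beta = \lim_\omega \beta_n$, where $\alpha_n, \beta_n$ are geodesics in $X$ whose endpoints represent $u$ and $v$. The approximating $Q_n$ will take the form: two main sides obtained from $\alpha_n$ and $\beta_n$ by trimming short subsegments near each vertex, joined by two short connector geodesics near $u$ and $v$.

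To choose the trimming, I exploit simplicity of $P$: the compact sets $\alpha_\epsilon := \alpha \setminus (B(u, \epsilon) \cup B(v, \epsilon))$ and $\beta_\epsilon$ (defined analogously) are disjoint subsets of the cone for all sufficiently small $\epsilon > 0$, so $c(\epsilon) := d_\omega(\alpha_\epsilon, \beta_\epsilon) > 0$. Since for each fixed $\epsilon > 0$ one has $c(\epsilon) l_n \to +\infty$, a standard diagonal argument yields a sequence $\epsilon_n \to 0$ with $c(\epsilon_n) l_n \to +\infty$. I then define $Q_n$ to be the geodesic quadrangle whose two main sides are the sub-geodesics of $\alpha_n, \beta_n$ cut at $X$-distance $\epsilon_n l_n$ from their respective endpoints, together with two $X$-geodesic connectors joining the corresponding trimmed endpoints near $u$ and near $v$.

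For the limit: since $\epsilon_n \to 0$ at the cone scale, the trimmed sub-geodesics still have $\omega$-limits $\alpha$ and $\beta$, while each connector has $X$-length $O(\epsilon_n l_n)$ and hence $\omega$-limits to the single cone point $u$ or $v$; therefore $\lim_\omega Q_n = P$. For $\theta$-fatness: the two pairs of non-adjacent sides in a quadrangle are (i) the two main trimmed sides, which by construction lie at $X$-distance at least $\tfrac{1}{2} c(\epsilon_n) l_n$, exceeding $\theta$ for $n$ large enough; and (ii) the two connectors, which cluster near $u_n^\alpha$ and $v_n^\alpha$ respectively and therefore lie at $X$-distance at least $d_\omega(u, v)\, l_n - o(l_n) \to +\infty$ by non-triviality of $P$. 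Both distances exceed $\theta$ for $\omega$-almost every $n$.

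The main obstacle is the balancing act in selecting $\epsilon_n$: it must tend to $0$ in the cone scale so that $\lim_\omega Q_n = P$, yet $c(\epsilon_n) l_n$ must diverge in order for the fatness bound to hold; this is exactly where positivity of $c(\epsilon)$, hence the simplicity of $P$, enters, and it is resolved by the diagonal argument above. A secondary issue is verifying that each $Q_n$ is genuinely \emph{simple}: by the $\theta$-separation of non-adjacent sides, any self-intersection of $Q_n$ must occur between adjacent sides near a connector, and can be absorbed by shortening the main sides to start just after their last intersection with the adjacent connector, without affecting either the cone limit or the fatness bounds.
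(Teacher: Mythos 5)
Your proposal is correct and is exactly the argument the paper has in mind: the paper gives no independent proof of this lemma, merely asserting that the method of Dru\c{t}u's \cite[Proposition 4.14]{D09} (trimming the two geodesics near the common endpoints at a scale $\epsilon_n l_n$ chosen by a diagonal argument over the positive separations $c(\epsilon)$ forced by simplicity, then inserting short connector geodesics and performing the standard surgery at the corners to restore simplicity) adapts from triangles/hexagons to bigons/quadrangles. Your write-up is a faithful instantiation of that method, including the two points that actually need care --- the balance between $\epsilon_n\to 0$ and $c(\epsilon_n)l_n\to\infty$, and the corner surgery for simplicity.
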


Let $\alpha_1$ and $\alpha_2$ be two paths. A bigon \textit{formed by} $\alpha_1$ and $\alpha_2$ is a union of a subpath of $\alpha_1$ with a subpath of $\alpha_2$ such that the two subpaths have common endpoints.

\begin{Lemma}\cite[Lemma 3.8]{D09} \label{lemma:OneImplyTheOther}
Let $Y$ be a metric space and let $\mathcal{B}$ be a collection of closed subsets of $Y$ which satisfies property $(T_1)$.
Let $\alpha$ and $\beta$ be two paths with common endpoints such that any non-trivial simple bigon formed by $\alpha$ and $\beta$ is contained in a subset in $\mathcal{B}$.

If $\alpha$ is contained in $B\in \mathcal{B}$, then $\beta$ is also contained in $B$.
\end{Lemma}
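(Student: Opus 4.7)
The strategy is proof by contradiction, exploiting $(T_1)$ as a ``two-point pigeonhole'': if I can find a non-trivial simple bigon $P_0$ formed by $\alpha$ and $\beta$ which is \emph{not} contained in $B$, then the hypothesis places $P_0$ inside some $B' \in \mathcal{B}$, necessarily distinct from $B$, and the two distinct endpoints of $P_0$ sit in both $B$ (via $\alpha \subset B$) and $B'$, violating $(T_1)$. So under the assumption $\beta \not\subset B$, my task reduces to constructing such a bigon.

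\textbf{Excursion construction.} I would pick $t_0$ with $\beta(t_0) \notin B$; since $\alpha \subset B$, in particular $\beta(t_0) \notin \alpha$. The preimage $J := \beta^{-1}(\alpha)$ is closed in the parameter interval of $\beta$, contains both parameter endpoints (as $\alpha$ and $\beta$ share endpoints), and misses $t_0$, so $t_0$ lies in a maximal open component $(s_1, s_2)$ of $[0,|\beta|] \setminus J$; by maximality $\beta(s_1), \beta(s_2) \in \alpha$, while $\beta((s_1,s_2))$ is disjoint from $\alpha$. Choosing $r_1 \leq r_2$ with $\alpha(r_i) = \beta(s_i)$, the union $P_0 := \alpha|_{[r_1,r_2]} \cup \beta|_{[s_1,s_2]}$ is a bigon formed by $\alpha$ and $\beta$ whose two sides meet only at the common endpoints (the interior of the $\beta$-side being disjoint from $\alpha$), and which contains the point $\beta(t_0) \notin B$. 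For simple $\alpha$ and $\beta$ (as in the intended application to limit geodesics), both sides of $P_0$ are simple and the endpoints $\beta(s_1) \neq \beta(s_2)$, so $P_0$ is the desired non-trivial simple bigon.

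\textbf{Closing step and main obstacle.} Applied to $P_0$, the hypothesis yields some $B' \in \mathcal{B}$ with $P_0 \subset B'$; since $P_0 \not\subset B$, we have $B' \neq B$, yet $\{\beta(s_1), \beta(s_2)\} \subset B \cap B'$, contradicting $(T_1)$. The delicate point is the simplicity of the two sides of $P_0$: for simple paths it is automatic, but for general paths a naive loop-excision could collapse the excursion back into $B$. The fix is to extract a simple sub-bigon of $P_0$ by a minimality argument---shrinking $[s_1,s_2]$ and the corresponding $[r_1,r_2]$ while preserving at least one point of the $\beta$-side outside $B$---which is the only place where real topological care is needed.
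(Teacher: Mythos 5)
Your argument is correct and is the natural proof: isolate a maximal excursion of $\beta$ off of $\alpha$ (equivalently, a maximal open component of $[0,|\beta|]\setminus\beta^{-1}(\alpha)$) that contains a point of $\beta$ outside $B$, form the bigon $P_0$ from this excursion and the corresponding subarc of $\alpha$, and observe that the hypothesis then traps $P_0$ in some $B'\in\mathcal{B}$ with $B'\neq B$; the two excursion endpoints lie on $\alpha\subset B$ and on $P_0\subset B'$, contradicting $(T_1)$. You are also right to flag simplicity as the delicate point, and in fact the caveat is sharper than a mere technicality: if $\beta$ is not injective, the excursion can return to the same point of $\alpha$, so that $\beta(s_1)=\beta(s_2)$, the bigon degenerates, and the hypothesis about non-trivial simple bigons can hold vacuously while $\beta\not\subset B$ (for instance $\alpha$ a straight segment on a line $B$ and $\beta$ a path with a spike that is traced out and then retraced). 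So the sketched ``minimality'' fix cannot recover the general case; the statement genuinely needs $\alpha,\beta$ to be simple arcs, which is the setting in Dru\c{t}u's original lemma. Since the paper only ever applies Lemma \ref{lemma:OneImplyTheOther} to limit geodesics, which are geodesics in an asymptotic cone and hence injective, the case you treat completely is exactly the one needed, and your proof of it is correct.
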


\begin{Lemma}\cite[Proposition 3.9]{D09} \label{lemma:FourGeodesics}
Let $Y$ be a metric space and let $\mathcal{B}$ be a collection of closed subsets of $Y$ which satisfies property $(T_1)$.

Let $\alpha_1$ and $\beta_1$ (resp. $\alpha_2$ and $\beta_2$) be two paths with common endpoints $u,v$ (resp. $v,w$). Assume that:
\begin{enumerate}
\item $\alpha_1\cap\alpha_2=\{v\}$, $\beta_1\cap\beta_2$ contains a point $a\neq v$;
\item all non-trivial simple bigons formed either by $\beta_1$ and $\beta_2$, or by $\alpha_1$ and $\beta_1$, or by $\alpha_2$ and $\beta_2$ are contained in a subset in $\mathcal{B}$.
\end{enumerate}
Then the bigon formed by $\beta_1$ and $\beta_2$ with endpoints $a$ and $v$ is contained in a subset in $\mathcal{B}$.
\end{Lemma}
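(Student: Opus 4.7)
Write $p_1 = [\beta_1]_a^v$ and $p_2 = [\beta_2]_a^v$, so the bigon in question is $B_0 := p_1 \cup p_2$. If $B_0$ is already simple, the conclusion is immediate from the hypothesis on simple bigons of $\beta_1$ and $\beta_2$, so I shall proceed by induction on the number of interior intersection points of $p_1$ and $p_2$. The main idea is to place each simple sub-bigon of $B_0$ into some piece of $\mathcal{B}$ via the bigon hypothesis, and then use the auxiliary paths $\alpha_1, \alpha_2$ together with property $(T_1)$ to force these pieces to coincide.

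The propagation step relies on Lemma \ref{lemma:OneImplyTheOther} applied to the bigons formed by $\alpha_1$ and $\beta_1$, and symmetrically by $\alpha_2$ and $\beta_2$. Let $B^{\star}$ be the simple sub-bigon of $B_0$ adjacent to $v$, lying in a piece $P \in \mathcal{B}$. Since a non-trivial terminal segment of $\beta_1$ sits in $P$ and every simple sub-bigon formed by $\alpha_1$ and $\beta_1$ lies in some piece (which by $(T_1)$ must equal $P$ once it meets that terminal segment in more than a point), Lemma \ref{lemma:OneImplyTheOther} yields that the tail of $\alpha_1$ near $v$ also lies in $P$. Symmetrically, an initial segment of $\alpha_2$ near $v$ lies in $P$. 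Because $\alpha_1 \cap \alpha_2 = \{v\}$, the piece $P$ is now uniquely distinguished among all pieces through $v$: any piece $P'$ that contains a non-trivial germ of $\alpha_1$ or $\alpha_2$ at $v$ overlaps $P$ in more than one point, so $(T_1)$ forces $P = P'$.

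To finish the induction, pick an interior intersection point $c$ of $p_1$ and $p_2$ and decompose $B_0 = B' \cup B''$, where $B'$ runs from $a$ to $c$ and $B''$ from $c$ to $v$, each with strictly fewer interior intersections. The inductive hypothesis puts $B''$ inside a single piece, which the propagation argument above identifies with $P$. For $B'$, the sub-arc of $\beta_1$ in a neighbourhood of $c$ lies both in the piece containing $B'$ and in $P$ (via $B''$); these two pieces share more than the single point $c$, so $(T_1)$ forces them to coincide. Hence $B_0 \subseteq P$, as required.

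The step I expect to be most delicate is making the ``non-trivial tail in $P$'' assertion precise when invoking Lemma \ref{lemma:OneImplyTheOther}: one must verify that the transferred containment really furnishes enough of $\alpha_1$ near $v$ to overlap any neighbouring piece in strictly more than the single point $v$, so that $(T_1)$ genuinely applies. Once that is in place, the $(T_1)$-gluing of the simple sub-bigons of $B_0$ along the intersection points of $p_1$ and $p_2$ goes through without further issues.
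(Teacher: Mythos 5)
This lemma is quoted in the paper directly from Dru\c{t}u \cite[Proposition 3.9]{D09}; the paper itself does not supply a proof, so the comparison below is with the argument one would actually need, not with a proof that appears in the text.

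Your proposal contains a genuine gap, and it occurs precisely where you wave your hand: in the handling of $B'$ in the induction step. You split $B_0 = B' \cup B''$ at an interior intersection point $c$, obtain $B'' \subseteq P$ by the inductive hypothesis, and then assert that ``the sub-arc of $\beta_1$ in a neighbourhood of $c$ lies both in the piece containing $B'$ and in $P$ (via $B''$); these two pieces share more than the single point $c$.'' This is false as stated. The sub-arc of $\beta_1$ near $c$ on the $v$-side lies in $P$ (being in $B''$), while the sub-arc of $\beta_1$ near $c$ on the $a$-side lies in whatever piece contains the simple sub-bigon of $B'$ adjacent to $c$; these are disjoint arcs meeting only at $c$, so the two pieces are only known to share the single point $c$, and $(T_1)$ gives nothing. (Moreover, ``the piece containing $B'$'' is not even available, since the lemma only speaks of bigons ending at $v$, and $c\neq v$, so the inductive hypothesis does not apply to $B'$ directly.)

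The missing idea is that the hypothesis $\alpha_1 \cap \alpha_2 = \{v\}$ must be used not only at $v$ (which is what your ``propagation'' step does) but at every interior intersection point $c$ of $p_1$ and $p_2$. Since $c\neq v$, at least one of $\alpha_1$, $\alpha_2$ misses $c$; say $c\notin\alpha_1$. Then $c\in\beta_1\setminus\alpha_1$, so $c$ lies in the interior of the $\beta_1$-side of some non-trivial simple bigon $C$ formed by $\alpha_1$ and $\beta_1$, and $C$ lies in a piece $R$ by hypothesis (2). Crucially, $C$ contains arcs of $\beta_1$ on \emph{both} sides of $c$. The $v$-side arc overlaps the $\beta_1$-arc of $B''\subseteq P$ in more than a point, so $(T_1)$ forces $R=P$; hence the $a$-side arc of $\beta_1$ near $c$ also lies in $P$. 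This is the bridge across $c$ that your argument lacks, and it is the only place the hypothesis $\alpha_1 \cap \alpha_2 = \{v\}$ does real work. Once one has this bridging step at each intersection point, one can propagate $P$ from $v$ all the way down to $a$ (by a connectedness argument or by iterating the bridging, with $(T_1)$ identifying each new simple sub-bigon's piece with $P$). In contrast, your use of Lemma~\ref{lemma:OneImplyTheOther} in the second paragraph is also imprecise -- that lemma requires one of the two paths to lie \emph{entirely} in a piece, not just a tail -- although the conclusion you draw there (tails of $\alpha_1,\alpha_2$ at $v$ lie in $P$) is correct and can be justified directly via $(T_1)$; the serious problem is the interior bridging described above.
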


\section{The proof of the Theorem} \label{section:proof}

In this Section, $\Gamma$ is always a $Gr'(\frac{1}{6})$-labeled graph labeled by a finite set $S$, whose pieces have length less than $M$. Let $X:=\mathscr{G}(G(\Gamma),S)$ be the Cayley graph of $(G(\Gamma),S)$, with the induced length metric $d$ on $X$ by assigning each edge length $1$.
In this setting, we want to show $\mathscr{G}(G(\Gamma),S)$ is asymptotically tree-graded with respect to the collection of all embedded components of $\Gamma$ ( denoted by $\mathcal{A}$).

\subsection{Contracting property of embedded components}

\begin{Definition}[Contracting subset]
Let $(Y,d_Y)$ be a metric space.
A subset $A\subseteq Y$ is called strongly $(\kappa,C)$-\textit{contracting} if for any geodesic $\gamma$ disjoint with $\overline{N}_C(A)$, we have $\mathrm{diam}(\pi_A(\gamma))\leq \kappa$. A collection of strongly $(\kappa,C)$-contracting subsets is referred to a strongly $(\kappa,C)$-\textit{contracting system}.
\end{Definition}

\begin{Example} \label{Example:LeftCosetsInRelHyp}
In the Cayley graph of a relatively hyperbolic group, the collection of all left cosets of peripheral subgroups is a strongly contracting system by \cite[Proposition 8.2.4]{GP15}.
\end{Example}

Now we show that $\mathcal{A}$ is a strongly contracting system.

\begin{Proposition} \label{proposition:contraction}
Let $\Gamma$ be a $Gr'(\frac{1}{6})$-labelled graph whose pieces are uniformly bounded by $M$. Then the collection of all embedded components of $\Gamma$ in $\mathscr{G}(G(\Gamma),S)$ is a strongly $(2M,0)$-contracting system.
\end{Proposition}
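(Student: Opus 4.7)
My strategy is contradiction: assume an embedded component $A$ and a geodesic $\gamma$ disjoint from $A$ admit points $x, y \in \gamma$ with projections $x' \in \pi_A(x)$, $y' \in \pi_A(y)$ satisfying $d(x', y') > 2M$; I will derive a contradiction using a combinatorial diagram argument coming from Lemma \ref{Lemma:CombPolygon} and Strebel's classification.

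First I build the geodesic quadrangle $Q$ with sides $\alpha_1 := [x,y]_\gamma$, $\alpha_2 := [y, y']$, $\alpha_3 := [y', x']_A$, and $\alpha_4 := [x', x]$. By convexity of embedded components (Lemma \ref{lemma:EmbedConvex}) we have $\alpha_3 \subset A$. Since $\gamma \cap A = \emptyset$ and $x', y'$ are projections of $x, y$ to $A$, the sides $\alpha_2, \alpha_4$ meet $A$ only at the points $y', x'$ respectively; in particular $d(x, A) = |\alpha_4| \geq 1$ and $d(y, A) = |\alpha_2| \geq 1$. Then I apply Lemma \ref{Lemma:CombPolygon} to obtain a combinatorial quadrangle diagram $D$ realizing $Q$ whose interior arcs are pieces (hence of length $<M$); among all such diagrams I choose $D$ minimal, say with the fewest faces.

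The core step is to analyse faces of $D$ touching $\alpha_3$. Every face $\Pi$ of $D$ maps into a uniquely determined embedded component $A_\Pi$ of $\Gamma$ via the label-preserving map to $\mathscr{G}(G(\Gamma),S)$ (Lemma \ref{lemma:EmbedConvex}). For a boundary face $\Pi$ with an arc on $\alpha_3 \subset A$:
\begin{itemize}
\item if $A_\Pi \neq A$, then $\partial\Pi \cap \alpha_3 \subseteq A_\Pi \cap A$ embeds as a piece by Lemma \ref{lemma:PieceIntersection}, hence has length less than $M$;
\item if $A_\Pi = A$, then $\partial\Pi \subseteq A$; using that $A$ is an embedded copy of a component of $\Gamma$, such a face can be absorbed into $A$, contradicting minimality of $D$.
\end{itemize}
Performing vertex and face reductions on $D$ turns it into a combinatorial bigon or a combinatorial triangle (or a pair thereof), to which I apply Strebel's classification (Theorem \ref{thm:ClassificationForTriangle}). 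A case analysis through the forms $\mathrm{I}_1, \mathrm{I}_2, \mathrm{I}_3, \mathrm{II}, \mathrm{III}, \mathrm{IV}, \mathrm{V}$, combined with the fact that $\gamma$ and $\alpha_3$ are both geodesics (so no face can have an exterior arc longer than half of its boundary cycle on $\alpha_1$ or $\alpha_3$), shows that at most two boundary faces contribute an exterior arc to $\alpha_3$. Summing these contributions yields $|\alpha_3| = d(x', y') < 2M$, contradicting the standing assumption. Since the two points $x, y \in \gamma$ and the projection choices were arbitrary, this gives $\mathrm{diam}(\pi_A(\gamma)) \leq 2M$.

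\textbf{The main obstacle} lies in the combinatorial case analysis of Step 3: one must verify that after reduction, each Strebel form permits at most two boundary faces with exterior arc on $\alpha_3$, and one must justify rigorously that faces with $A_\Pi = A$ can be eliminated (i.e.\ that a genuinely minimal diagram has $A_\Pi \neq A$ for every face touching $\alpha_3$). This delicate interplay between the convexity of $A$, the geodesy of $\gamma$, and the uniform piece bound $M$ is what produces the precise constant $2M$; the remaining verifications are close in spirit to the diagrammatic arguments of Arzhantseva--Cashen--Gruber--Hume \cite{ACGH19}.
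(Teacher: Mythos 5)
Your setup mirrors the paper's, but the core of your argument has a genuine gap, and the missing idea is precisely the one the paper uses to make Strebel's classification bite.

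You fix $\alpha_3 = [y',x']_A$ as a geodesic in $A$ and then work directly with a combinatorial \emph{quadrangle} $D$. Strebel's classification (Theorem \ref{thm:ClassificationForTriangle}) applies only to simple combinatorial \emph{bigons and triangles}; you claim to bridge this by asserting that vertex and face reductions turn $D$ into a bigon or triangle, but that is false in general — non-degenerate irreducible quadrangles exist (this is why \cite{ACGH19} had to classify special quadrangles separately; see Lemma \ref{lemma:SpecialCombGeodQuadrangles}, which only extracts a path of at most $6$ pieces, far weaker than the $2$ you need). Your ``absorption'' step also doesn't work as stated: if a face $\Pi$ with $A_\Pi = A$ is removed, the boundary word changes, so the result is no longer a diagram filling your fixed quadrangle $Q$. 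To absorb such faces you must allow the side inside $A$ to be an \emph{arbitrary} path $p$ (not a geodesic) and minimize the number of edges of the diagram over all choices of $p$ simultaneously — this is exactly what the paper does, and it is what makes the minimality argument show that every interior arc meeting $p$, and indeed $p$ itself, is a concatenation of pieces.

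The paper then introduces the idea you are missing: it attaches a new, unlabelled ``fake'' face $\Pi'$ to $D$ along $p$, turning the four-sided diagram into a \emph{combinatorial triangle} $D'$ with a distinguished vertex on $\partial\Pi'\setminus p'$. Because every interior arc is a piece and the other sides are geodesics, $D'$ is a genuine $(3,7)$-combinatorial triangle; Strebel's classification of simple combinatorial triangles then shows immediately that at most $2$ faces of $D$ touch $p$, so $p$ consists of at most $2$ pieces and $d(x',y') \leq 2M$. (The case where $\beta_1$ and $\beta_2$ intersect is handled separately, giving a triangle $D''$ and the form $\mathrm{I}_1$.) Without the capping-off construction and the variable-$p$ minimality, your case analysis in Step~3 cannot be completed, so the proof as sketched does not go through.
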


The method in the proof of \cite[Theorem 4.1]{ACGH19} for geodesics can also apply to embedded components. However, in the case of an embedded component, the proof is much easier.

\begin{proof}
Let $\Gamma_0$ be an embedded component, and let $\alpha$ be a geodesic disjoint from $\Gamma_0$. If $x',y'\in \pi_{\Gamma_0}(\alpha)$ are points realizing $\mathrm{diam}(\pi_{\Gamma_0}(\alpha))$, we wish  to bound $d(x',y')$ by $2M$.

If $x'=y'$ we are done. Otherwise, choose $x,y\in\alpha$, such that $x'\in\pi_{\Gamma_0}(x),y'\in\pi_{\Gamma_0}(y)$. Let $\alpha'$ be the subpath of $\alpha$ or $\overline{\alpha}$ (the inverse of $\alpha$) from $x$ to $y$. Choose a path $p$ from $x'$ to $y'$ in $\Gamma_0$, and choose geodesics $\beta_1=[x,x']$ and $\beta_2=[y,y']$, then $\beta_1\cap\Gamma_0=x'$ and $\beta_2\cap\Gamma_0=y'$ since $x',y'$ are closest point projection. Choose a diagram $D$ over $\langle S|R\rangle$ as in Lemma \ref{Lemma:diagram} filling the quadrangle $\alpha'\beta_2\overline{p}\overline{\beta_1}$. Assume that we have chosen $p$ and $D$ so that $D$ has minimal number of edges among all possible choices of $p$. To simplify the notation, the sides of $D$ are also denoted by $\alpha',\beta_2,\overline{p},\overline{\beta_1}$.

First assume that $\beta_1$ and $\beta_2$ are not intersect. Then $\alpha'$ can be chosen in such a way that both $\alpha'\cap\beta_1$ and $\alpha'\cap\beta_2$ have only one point. Therefore   $\alpha'\beta_2\overline{p}\overline{\beta_1}=\partial D$ is a simple cycle, and $D$ is a simple combinatorial quadrangle.

Let $a$ be an arc of $D$ contained in $p$ and let $\Pi$ be a face of $D$ containing $p$. Then $a$ has a lift to $\Gamma$ via first being a subpath of $p\subseteq \Gamma_0$ then returning to a component of $\Gamma$. Another lift of $a$ to $\Gamma$ via being a subpath of $\partial\Pi$ whose label is contained in $R$, and thus $\partial\Pi$ has a embedding to $\Gamma$. If these two lifts coincide (up to label-preserving automorphisms of $\Gamma$), then we can remove the edges of $a$ from $D$, thus we obtain a path $p'$ in $\Gamma_0$ by replacing the subpath $a$ of $p$ with the interior edges of $\Pi$, which contradicts the minimality. Hence, the two lifts are essentially distinct, and $a$ is a piece. If $p_-$ (resp. $p_+$) is a distinguished vertex, $\Pi$ is the face of $D$ containing the initial (resp. terminal) edge of $p$ and $a$ is the initial (resp. terminal) subpath of $p$ contained in $\partial\Pi\cap p$, then the above two lifts are always essentially distinct since $\beta_1\cap\Gamma_0=x'$ (resp. $\beta_2\cap\Gamma_0=y'$). Therefore, $p$ is a concatenation of pieces.

We construct a new diagram $D'$ by attaching a new face $\Pi'$ to $D$ with a proper subpath $p'$ of $\partial\Pi'$ identifying the side $p$ of $D$.
This operation is purely combinatorial: the boundary of $\Pi'$ is not labelled. We claim that $D'$ is a combinatorial triangle with a distinguished vertex in $\partial\Pi'\setminus p'$. For any interior face of $D'$, it must be contained in $D$ and its boundary is always a concatenation of pieces in $\Gamma$ by the previous paragraph, thus its interior degree is at least $7$ by the   $Gr'(\frac{1}{6})$     condition. For any boundary face $\Pi$ of $D'$ with $e(\Pi)=1$ for which the exterior arc $b$ in $\partial\Pi$ contained in one side of $D'$, $\Pi$ also must be contained in $D$ and $b$ is contained in one of $\alpha',\overline{\beta_1},\beta_2$. So $b$ is a geodesic in $X$ and the interior arcs of $\Pi$ are pieces, thus the sum of the length of those interior arcs is at least half of the length of $\partial\Pi$, therefore $i(\Pi)\geq4$ by  the $Gr'(\frac{1}{6})$ condition again.

By discussing all the cases listed in Theorem \ref{thm:ClassificationForTriangle} about $D'$, we have that $p$ consists of at most $2$ pieces, so $d(x',y')$ is bounded by $2M$.

Now suppose that $\beta_1$ and $\beta_2$ intersect. Let $D''$ be the maximal simple subdiagram of $D$ containing $p$ (such $D''$ exists since $p$ is disjoint with $\alpha$ and $p$ intersects $\beta_1,\beta_2$ only in their endpoints).
Then $D''$ is a simple combinatorial triangle. The same argument as above shows that $p$ is a piece (here the corresponding $D'$ can only have form $\mathrm{I}_1$ in Theorem \ref{thm:ClassificationForTriangle}), so $d(x',y')$ is bounded by $2M$ as well.
\end{proof}

We call the simple cycle of $\mathscr{G}(G(\Gamma),S)$  contained in an embedded components an \textit{embedded simple cycle}. We know from the proof of \cite[Theorem 4.1]{ACGH19} that a geodesic is strongly contracting if and only if its intersections with all embedded simple cycles are uniformly bounded.
Similarly, an embedded component is strongly contracting provided that its intersections with all other embedded components are uniformly bounded. Furthermore, an embedded component is strongly contracting provided that all pieces are uniformly bounded because the paths contained in the intersection of two different embedded components are pieces (Lemma \ref{lemma:PieceIntersection}).
Since our proof strongly depends on the contractibility of the embedded components, the condition that all pieces of the defining graph have uniformly bounded length seems can not be improved if we use the method in this paper.

\begin{Lemma} \label{lemma:ASatisfyAlpha1}
$\mathcal{A}$ satisfies property $(\Lambda_1)$.
\end{Lemma}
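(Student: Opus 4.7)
The plan is to derive $(\Lambda_1)$ directly from the strongly contracting property of $\mathcal{A}$ proved in Proposition \ref{proposition:contraction}, together with the convexity of embedded components (Lemma \ref{lemma:EmbedConvex}) and the description of their pairwise intersections via pieces (Lemma \ref{lemma:PieceIntersection}). Fix $\delta>0$ and two distinct $A_1,A_2\in \mathcal{A}$, and suppose $x,y\in N_\delta(A_1)\cap N_\delta(A_2)$. For $i\in\{1,2\}$, select $x_i\in \pi_{A_i}(x)$ and $y_i\in \pi_{A_i}(y)$, so $d(x,x_i),d(y,y_i)\leq \delta$. By convexity of $A_1$, there is a geodesic $\gamma$ of $X$ from $x_1$ to $y_1$ that lies in $A_1$. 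The idea is to project $\gamma$ onto $A_2$ to bound $d(x_2,y_2)$, then invoke the triangle inequality.

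The analysis splits into two cases. If $\gamma\cap A_2=\emptyset$, the strongly $(2M,0)$-contracting property immediately gives $\mathrm{diam}(\pi_{A_2}(\gamma))\leq 2M$, so $d(\pi_{A_2}(x_1),\pi_{A_2}(y_1))\leq 2M$; pairing each of $\pi_{A_2}(x_1),\pi_{A_2}(y_1)$ with $x_2,y_2$ via the triangle inequality bounds $d(x,y)$ by a constant depending only on $\delta$ and $M$. Otherwise $\gamma\cap A_2\neq\emptyset$, and convexity of $A_2$ forces $\gamma\cap A_2$ to be a connected sub-arc $[s,t]_\gamma$ of $\gamma$: for any two points of the intersection, the sub-arc of $\gamma$ between them is a geodesic of $X$ with endpoints in $A_2$, hence lies in $A_2$. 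This sub-arc lies in $A_1\cap A_2$, so by Lemma \ref{lemma:PieceIntersection} it is a piece of length at most $M$. The two complementary sub-arcs $[x_1,s]_\gamma$ and $[t,y_1]_\gamma$ meet $A_2$ only at $s$ and $t$ respectively; after removing one edge at each contact vertex they become geodesics disjoint from $A_2$, and strong contraction yields $d(\pi_{A_2}(x_1),s),d(t,\pi_{A_2}(y_1))\leq 2M+2$. Combining with $d(s,t)\leq M$, I conclude $d(\pi_{A_2}(x_1),\pi_{A_2}(y_1))\leq 5M+4$, and the triangle inequality finishes the bound on $d(x,y)$.

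The main mild technicality is in the second case: a sub-arc of $\gamma$ touches $A_2$ at one endpoint without being literally disjoint from $A_2$, so the strong contraction hypothesis does not apply verbatim. The one-edge shortening trick in the discrete Cayley graph repairs this at the cost of a small additive constant, and no further difficulties arise.
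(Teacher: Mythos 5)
Your argument reproduces the paper's proof essentially step for step: both take a geodesic $\gamma$ inside $A_1$ between $\delta$-close points of $A_1$, observe via convexity that $\gamma\cap A_2$ is a connected sub-arc lying in $A_1\cap A_2$ and hence a piece of length $\leq M$ (Lemma \ref{lemma:PieceIntersection}), apply $(2M,0)$-contraction from Proposition \ref{proposition:contraction} to the two complementary sub-arcs, and finish with the triangle inequality. The only substantive difference is that you explicitly trim one edge off each complementary sub-arc before invoking contraction, since those sub-arcs touch $A_2$ at one endpoint; the paper passes over this point by asserting ``the discussion in the previous paragraph still holds'' for the truncated geodesics, so your version is in fact the more careful one, at the negligible cost of an additive constant in the final bound.
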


\begin{proof}
Given two different embedded components $\Gamma_1, \Gamma_2\in \mathcal{A}$, and $\delta\geq0$. For any two point $x_0,y_0\in N_\delta(\Gamma_1)\cap N_\delta(\Gamma_2)$, there exists $x_1,y_1\in \Gamma_1$, such that $d(x_1,x_0)\leq\delta,d(y_1,y_0)\leq\delta$. By Lemma \ref{lemma:EmbedConvex}, the embedded components of $\Gamma$ is convex in $\mathscr{G}(G(\Gamma),S)$, in particular, $\Gamma_1$ is convex. Thus we have that $\alpha=[x_1,y_1]\subseteq \Gamma_1$.

If $\alpha$ is disjoint from $\Gamma_2$, then $\mathrm{diam}(\pi_{\Gamma_2}(\alpha))\leq 2M$ by Proposition \ref{proposition:contraction}. Let $a\in \pi_{\Gamma_2}(x_1),b\in\pi_{\Gamma_2}(y_1)$, then $d(a,b)\leq \mathrm{diam}(\pi_{\Gamma_2}(\alpha))\leq 2M$. And we have
$$d(x_1,a)\leq d(x_1,x_0)+d(x_0,\Gamma_2)\leq 2\delta,~d(y_1,b)\leq d(y_1,y_0)+d(y_0,\Gamma_2)\leq2\delta.$$
Therefore, $d(x_1,y_1)\leq d(x_1,a)+d(a,b)+d(b,y_1)\leq 2M+4\delta$, so $d(x_0,y_0)\leq d(x_0,x_1)+d(x_2,y_1)+d(y_1,x_0)\leq 2M+6\delta$.

Now assume that the intersection $\alpha\cap \Gamma_2$ is nonempty.
Let $z$ be the starting point of $\alpha\cap \Gamma_2$ if $x_1\notin \Gamma_2$, i.e., $z$ is the point in $\alpha\cap \Gamma_2$ such that $[x_1,z)_{\alpha}\cap \Gamma_2=\emptyset$.
Let $w$ be the ending point of $\alpha\cap \Gamma_2$ if $y_1\notin \Gamma_2$, i.e., $w$ is the point in $\alpha\cap \Gamma_2$ such that $(w,y_1]_{\alpha}\cap \Gamma_2=\emptyset$.
With $\alpha$ replaced by either $[x_1,z]_{\alpha}$ or $[w,y_1]_{\alpha}$, the discussion in the previous paragraph still holds, so we have $d(x_1,z)\leq 2M+4\delta, d(w, y_1)\leq 2M+4\delta$.
Since $z,w\in \Gamma_1\cap \Gamma_2$, and $\Gamma_1,\Gamma_2$ are convex, we know that $[z,w]$ is contained in both $\Gamma_1$ and $\Gamma_2$. Then by Lemma \ref{lemma:PieceIntersection}, $[z,w]$ is a single piece, and thus $d(z,w)\leq M$. Therefore, $d(x_1,y_1)=d(x_1,z)+d(z,w)+d(w,y_1)\leq 5M+8\delta$, so $d(x_0,y_0)\leq 5M+10\delta$.

Since $x_0,y_0$ are taken arbitrarily, we obtained that the diameter of the intersection $N_\delta(\Gamma_1)\cap N_\delta(\Gamma_2)$ is at most $5M+10\delta$.
\end{proof}

By Proposition \ref{proposition:contraction} and Lemma \ref{lemma:ASatisfyAlpha1}, $\mathcal{A}$ is a strongly contracting system with \textit{bounded intersection property} considered in \cite{Y14}. The collection in Example \ref{Example:LeftCosetsInRelHyp} is such a system as well.

\begin{Lemma} \label{lemma:ASatisfyAlpha2}
$\mathcal{A}$ satisfies property $(\Lambda_2)$.
\end{Lemma}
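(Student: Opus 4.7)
The plan is to derive $(\Lambda_2)$ directly from the strong $(2M,0)$-contracting property of $\mathcal{A}$ established in Proposition \ref{proposition:contraction}, by a short contradiction argument using only the triangle inequality. I claim that any constant $R > 2M$ works.

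First I would set up the contradiction. Fix $A \in \mathcal{A}$ and a geodesic $\gamma = [x,y]$ with $d(x,A), d(y,A) \leq |\gamma|/3$. Suppose, for contradiction, that $\gamma \cap N_R(A) = \emptyset$ for some $R > 2M$. In particular $\gamma$ is disjoint from $A$ itself (since $R > 0$ and $C = 0$ in the contracting constants), so Proposition \ref{proposition:contraction} applies and gives $\mathrm{diam}(\pi_A(\gamma)) \leq 2M$. Pick any $x' \in \pi_A(x)$ and $y' \in \pi_A(y)$; since $x, y \in \gamma$, we have $d(x', y') \leq 2M$.

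Next I would bound $|\gamma|$ from above using the triangle inequality:
\[
|\gamma| = d(x,y) \leq d(x,x') + d(x',y') + d(y',y) \leq \frac{|\gamma|}{3} + 2M + \frac{|\gamma|}{3},
\]
which yields $|\gamma| \leq 6M$, and hence $d(x,A) \leq |\gamma|/3 \leq 2M$. But the assumption $\gamma \cap N_R(A) = \emptyset$ in particular forces $d(x,A) \geq R > 2M$, a contradiction. Therefore $\gamma \cap N_R(A) \neq \emptyset$ for every $R > 2M$, and $(\Lambda_2)$ holds (with constant, say, $R = 2M+1$).

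There is essentially no obstacle here — the whole point of having proved the strong contracting property first is that properties $(\Lambda_1)$ and $(\Lambda_2)$ reduce to elementary triangle-inequality manipulations, as also remarked after Lemma \ref{lemma:ASatisfyAlpha1}. The only subtlety to keep track of is that the contracting constant $C$ is $0$, so that "disjoint from $A$" (rather than disjoint from a neighborhood of $A$) is the hypothesis under which the projection bound can be applied; this is automatic because we are assuming $\gamma$ avoids a neighborhood of $A$.
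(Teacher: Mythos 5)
Your proposal is correct and follows essentially the same route as the paper: apply the strong $(2M,0)$-contracting property from Proposition \ref{proposition:contraction} to bound $\mathrm{diam}(\pi_A(\gamma))$ by $2M$, then use the triangle inequality to force $|\gamma|\leq 6M$ and hence $d(x,A)\leq 2M$. The only cosmetic difference is that you phrase it as a contradiction starting from $\gamma\cap N_R(A)=\emptyset$, whereas the paper first dispenses with the case $\gamma\cap A\neq\emptyset$ and then argues directly under the assumption $\gamma\cap A=\emptyset$; the content is identical.
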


\begin{proof}
Let $\alpha$ be a geodesic whose endpoints at distance at most $\frac{1}{3}|\alpha|$ from an embedded component $\Gamma_0\in\mathcal{A}$, where $|\alpha|$ is the length of $\alpha$.

If the intersection $\alpha\cap \Gamma_0$ is nonempty, we are done.
Now we assume that $\alpha\cap \Gamma_0=\emptyset$, then $\mathrm{diam}(\pi_{\Gamma_0}(\alpha))\leq 2M$ by Proposition \ref{proposition:contraction}. Let $a'\in \pi_{\Gamma_0}(\alpha_-),b'\in \pi_{\Gamma_0}(\alpha_+)$, then $d(a',b')\leq \mathrm{diam}(\pi_{\Gamma_0}(\alpha))\leq 2M$, and $d(\alpha_-,a')\leq \frac{1}{3}|\alpha|,d(\alpha_+,b')\leq\frac{1}{3}|\alpha|$. Therefore, $$|\alpha|=d(\alpha_-,\alpha_+)\leq d(\alpha_-,a')+d(a',b')+d(b',\alpha_+)\leq \frac{2}{3}|\alpha|+2M,$$
so $|\alpha|\leq 6M$, and thus $d(\alpha_-,a')\leq \frac{1}{3}|\alpha|\leq 2M$. So we always have that $\alpha\cap N_{2M+1}(\Gamma_0)\neq\emptyset$.

We have proved that for any set $\Gamma_0$ in $\mathcal{A}$, a geodesic with endpoints at distance at most one third of its length from $\Gamma_0$ intersects the $(2M+1)$-tubular neighborhood of $\Gamma_0$. So $\mathcal{A}$ satisfies property $(\Lambda_2)$.
\end{proof}

By \cite[Lemma 4.5]{DS05}, Properties $(\Lambda_1)$ and $(\Lambda_2)$ together imply property $(T_1)$.


\begin{Corollary} \label{corollary:ASatisfiesT1}
In every asymptotic cone $\mathrm{Con}_\omega(X;(e_n),(l_n))$ of $X=\mathscr{G}(G(\Gamma),S)$, the collection $\mathcal{A}$ satisfies property $(T_1)$.
\end{Corollary}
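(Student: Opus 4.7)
The plan is very short: Lemma \ref{lemma:ASatisfyAlpha1} has established property $(\Lambda_1)$ and Lemma \ref{lemma:ASatisfyAlpha2} has established property $(\Lambda_2)$ for $\mathcal{A}$, so I would simply invoke \cite[Lemma 4.5]{DS05}, which asserts that $(\Lambda_1)$ and $(\Lambda_2)$ together imply property $(T_1)$ in every asymptotic cone. This is the one-line proof that the author is pointing to in the sentence just before the corollary statement.

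If a more self-contained argument is desired, I would use only the quantitative form of $(\Lambda_1)$ extracted from the proof of Lemma \ref{lemma:ASatisfyAlpha1}, namely $\mathrm{diam}(\overline{N}_\delta(\Gamma_1) \cap \overline{N}_\delta(\Gamma_2)) \leq 5M + 10\delta$ whenever $\Gamma_1 \neq \Gamma_2$ in $\mathcal{A}$. Fix an asymptotic cone $\mathrm{Con}_\omega(X; (e_n), (l_n))$ and two distinct elements $A_\omega = \lim_\omega A_n$ and $B_\omega = \lim_\omega B_n$ of $\mathcal{A}_\omega$. First I would observe that $A_n \neq B_n$ must hold $\omega$-almost surely, since otherwise the two limits would coincide. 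Then, arguing by contradiction, suppose that $A_\omega \cap B_\omega$ contains two distinct points $x \neq y$, and pick representing sequences $x_n, y_n \in A_n$ and $x_n', y_n' \in B_n$ with $\lim_\omega d(x_n, x_n')/l_n = 0$ and $\lim_\omega d(y_n, y_n')/l_n = 0$. Setting $\delta_n := \max\{d(x_n, x_n'), d(y_n, y_n')\}$, both $x_n$ and $y_n$ lie in $\overline{N}_{\delta_n}(A_n) \cap \overline{N}_{\delta_n}(B_n)$, so the quantitative $(\Lambda_1)$ yields $d(x_n, y_n) \leq 5M + 10\delta_n$; dividing by $l_n$ and taking $\lim_\omega$ gives $d_\omega(x, y) = 0$, contradicting $x \neq y$.

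I do not anticipate any real obstacle. The only subtle point is the scaling step, which goes through because the diameter bound from $(\Lambda_1)$ is linear in $\delta$ and is therefore absorbed once one rescales by $l_n \to \infty$. Note that this direct route does not in fact use $(\Lambda_2)$; the black-box citation of \cite[Lemma 4.5]{DS05} is stronger in that it requires no quantitative input beyond the qualitative form of the two properties.
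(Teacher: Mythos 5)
Your first paragraph is exactly the paper's proof: the corollary is obtained by citing \cite[Lemma 4.5]{DS05} on the heels of Lemmas \ref{lemma:ASatisfyAlpha1} and \ref{lemma:ASatisfyAlpha2}. Your optional self-contained argument via the quantitative bound $5M+10\delta$ is also sound (the only cosmetic point being open vs.\ closed tubular neighborhoods, which is immaterial after rescaling), but it is not needed since the black-box citation is what the paper intends.
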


\subsection{$\mathcal{A}_\omega$ satisfies $(\Omega_3)$}

In this subsection, we shall omit mentioning the requirement $\omega$-almost surely in most statements. However, we need keep in mind that our discussion are always under such an assumption.

Since there are only finite forms of simple combinatorial triangles, we know from Theorem \ref{thm:ClassificationForTriangle} that:
\begin{Lemma} \label{lemma:OneShapeAlmostSurely}
Let $\Delta_n$ be a sequence of simple combinatorial triangles. Then $\Delta_n$ has one of the forms $\mathrm{I}_1$, $\mathrm{I}_2$, $\mathrm{I}_3$, $\mathrm{II}$, $\mathrm{III}$, $\mathrm{IV}$, $\mathrm{V}$ $\omega$-almost surely.
\end{Lemma}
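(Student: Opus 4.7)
The statement follows almost immediately from Strebel's classification together with the basic combinatorics of non-principal ultrafilters, so the plan is quite short.

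My plan is to apply Theorem \ref{thm:ClassificationForTriangle} to each simple combinatorial triangle $\Delta_n$ individually. This gives a partition $\mathbb{N} = S_{\mathrm{I}_1} \sqcup S_{\mathrm{I}_2} \sqcup S_{\mathrm{I}_3} \sqcup S_{\mathrm{II}} \sqcup S_{\mathrm{III}} \sqcup S_{\mathrm{IV}} \sqcup S_{\mathrm{V}}$, where $S_F := \{n \in \mathbb{N} : \Delta_n \text{ has form } F\}$. Since a simple combinatorial triangle (in the sense of the paper, where form $\mathrm{III}_2$ is excluded by the definition of combinatorial polygon) must take exactly one of these seven forms, the partition exhausts $\mathbb{N}$.

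Now I would use the fundamental property of a non-principal ultrafilter $\omega$: it is finitely additive with values in $\{0,1\}$ and satisfies $\omega(\mathbb{N}) = 1$. Thus $1 = \omega(\mathbb{N}) = \sum_{F} \omega(S_F)$ where the sum ranges over the seven forms, which forces exactly one of the seven values $\omega(S_F)$ to equal $1$ while the remaining six vanish. Denoting the selected form by $F_0$, we conclude that $\Delta_n$ has form $F_0$ $\omega$-almost surely, which proves the lemma in the intended strong sense (that a single form applies almost surely, not merely that some form in the list applies).

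There is essentially no obstacle here; the only subtlety worth noting in the writeup is the preliminary remark that by the convention adopted in the definition of combinatorial polygon, form $\mathrm{III}_2$ from the original Strebel classification does not arise, so the list of seven forms is indeed complete and the partition of $\mathbb{N}$ is well-defined.
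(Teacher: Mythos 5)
Your proposal is correct and takes the same approach as the paper, which in fact dispenses with the proof in a single sentence preceding the lemma (``Since there are only finite forms of simple combinatorial triangles, we know from Theorem \ref{thm:ClassificationForTriangle} that\ldots''). You have simply made explicit the partition of $\mathbb{N}$ by form together with the finite additivity and $\{0,1\}$-valuedness of the ultrafilter, and you correctly flag the exclusion of form $\mathrm{III}_2$.
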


Let $(Y,d_Y)$ be a geodesic metric space and let $\mathcal{B}$ be a collection of subsets of $Y$.
Recall that property $(\Omega_2)$ refers to:
\begin{itemize}
\item[$(\Omega_2)$] For any asymptotic cone $\mathrm{Con}_{\omega}(Y; (e_n),(l_n))$,
 any simple non-trivial bigon with edges limit geodesics is contained in a subset from $\mathcal{B}_\omega$.
\end{itemize}

\begin{Lemma}
$\mathcal{A}_{\omega}$ satisfies property $(\Omega_2)$.
\end{Lemma}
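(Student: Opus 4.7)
Following the strategy indicated in the overview, my plan is to realise the bigon as an $\omega$-limit of fat combinatorial quadrangles, use Lemma \ref{lemma:SpecialCombGeodQuadrangles} to forbid the simple-non-degenerate-irreducible case, and then run iterated reductions until Strebel's classification pins the bigon inside a single element of $\mathcal{A}_\omega$.

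Concretely, fix $\theta := 6M+1$ and let $B$ be a simple non-trivial bigon in $\mathrm{Con}_\omega(X;(e_n),(l_n))$ with limit-geodesic sides. By Lemma \ref{lemma:FatQuadrangles} one can write $B = \lim_\omega Q_n$ where $Q_n$ is $\omega$-almost surely a simple $\theta$-fat geodesic quadrangle in $X$, and by Lemma \ref{Lemma:CombPolygon} there is an associated combinatorial quadrangle $D_n$ over $\langle S|R\rangle$ whose interior arcs are pieces of length at most $M$ and whose four sides match those of $Q_n$ via the label-preserving map. The first observation is that $\omega$-almost surely $D_n$ cannot be simple, non-degenerate and irreducible: Lemma \ref{lemma:SpecialCombGeodQuadrangles} would otherwise supply a path of at most $6$ interior arcs---hence of length $\le 6M$---between two opposite sides of $D_n$, which descends under the label-preserving map to a path of the same length between two opposite, and therefore non-adjacent, sides of $Q_n$, contradicting $\theta$-fatness.

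Since $D_n$ is then degenerate or reducible, the next step is to iterate side-merging at degenerate vertices together with vertex- and face-reductions. Each such operation either decreases the side count of the current diagram or splits it into strictly smaller combinatorial polygons, so the process terminates in a finite decomposition of $D_n$ into simple, irreducible, non-degenerate combinatorial bigons and triangles---each of one of the Strebel shapes $\mathrm{I}_1,\mathrm{I}_2,\mathrm{I}_3,\mathrm{II},\mathrm{III},\mathrm{IV},\mathrm{V}$ supplied by Theorem \ref{thm:ClassificationForTriangle}. Every face of this decomposition lifts to an embedded simple cycle inside some embedded component of $X$, and by Lemma \ref{lemma:PieceIntersection} any two distinct embedded components meet in a piece of length at most $M$.

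To finish, I will merge the face-lifts into a single element of $\mathcal{A}_\omega$ by combining the bounded-intersection property from Lemma \ref{lemma:ASatisfyAlpha1}, property $(T_1)$ in the cone from Corollary \ref{corollary:ASatisfiesT1}, and the gluing Lemmas \ref{lemma:OneImplyTheOther} and \ref{lemma:FourGeodesics}: every bounded-length transition between adjacent face-lifts collapses in the $l_n \to \infty$ scaling limit, and $(T_1)$ forbids any genuine jump to a different element of $\mathcal{A}_\omega$, so all face-lifts converge to a common $A = \lim_\omega \Gamma_n' \in \mathcal{A}_\omega$ containing $B$. I expect this final gluing step to be the main obstacle: a careful case analysis of the seven Strebel shapes is needed in order to keep track, through each reduction, of which sub-diagram carries the ``long'' portions of $Q_n$, and to verify that adjacent face-lifts converge to the same limit embedded component rather than accumulating into a chain of distinct elements of $\mathcal{A}_\omega$.
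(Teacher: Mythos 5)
Your setup coincides with the paper's: realize the bigon as $\lim_\omega Q_n$ with $Q_n$ $\theta$-fat for $\theta>6M$, fill with combinatorial quadrangles $D_n$ via Lemma \ref{Lemma:CombPolygon}, and rule out the simple, non-degenerate, irreducible case by Lemma \ref{lemma:SpecialCombGeodQuadrangles} (your observation that fatness kills a $\le 6M$ path between \emph{either} pair of opposite sides is fine; the paper uses the $o(l_n)$ length of the two short sides for the $\gamma_n$--$\delta_n$ pair, but both work). The problem is the remaining case, which is the actual content of the lemma, and there your argument has a genuine gap. Iterating all possible reductions terminates for each fixed $n$, but the number of resulting Strebel pieces, and hence of ``face-lifts,'' is not bounded uniformly in $n$. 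Your gluing step then asserts that bounded-length transitions between consecutive face-lifts collapse in the cone and that $(T_1)$ ``forbids a genuine jump to a different element of $\mathcal{A}_\omega$.'' Neither claim closes the argument: a chain of $k_n\to\infty$ distinct embedded components, consecutive ones overlapping in a piece of length $\le M$, need not converge to a single element of $\mathcal{A}_\omega$ — its limit is typically a concatenation of infinitely many limit pieces meeting in single points, which is exactly the configuration $(T_1)$ \emph{permits} in a tree-graded space (transversal paths through many pieces are ubiquitous there). Property $(T_1)$ only identifies two limit sets that share \emph{two} distinct points; you never produce such a pair for adjacent face-lifts. So the step you yourself flag as ``the main obstacle'' is precisely the missing proof.

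The paper avoids the unbounded iteration entirely. Since $D_n$ is simple it admits no vertex reduction, so reducibility means a single face reduction at some $(\Pi_n,e_n,e_n')$. If $e_n,e_n'$ lie on adjacent sides, one passes to an innermost subdiagram $D_n''$ whose excised corner triangles are bounded by single pieces and hence limit to points, so $\lim_\omega D_n''$ still carries the whole bigon and $D_n''$ admits no adjacent-side reduction. One is then left with \emph{one} face reduction at opposite sides, producing exactly two combinatorial triangles; Strebel forms $\mathrm{II}$--$\mathrm{V}$ are excluded (a $\le 4$-piece path between $\alpha_n$ and $\beta_n$ violates fatness; between $\gamma_n$ and $\delta_n$ it forces $d(\alpha_{n-},\alpha_{n+})=o(l_n)$), forms $\mathrm{I}_1,\mathrm{I}_2,\mathrm{I}_3$ place each half of the bigon inside a single limit component, and $(T_1)$ is invoked exactly once, using the two distinct common points of the two halves, to conclude they lie in the same element of $\mathcal{A}_\omega$. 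The degenerate case reduces to a single simple combinatorial triangle and is handled the same way. To repair your proof you would need to replace the open-ended decomposition by this controlled one-reduction analysis (or otherwise prove a uniform bound on the number of pieces in your decomposition, which is not available).
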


\begin{proof}
Given an asymptotic cone $\mathrm{Con}_{\omega}(X; (e_n), (l_n))$ of $X$, let $P$ be a non-trivial simple geodesic bigon in $\mathrm{Con}_{\omega}(X; (e_n), (l_n))$ whose edges $\alpha,\beta$ are limit geodesics.
Take $\theta>6M$, then by lemma \ref{lemma:FatQuadrangles}, $P$ is the limit of a sequence of $\omega$-almost surely $\theta$-fat simple geodesic quadrangles
\[Q_n=\alpha_n[\alpha_{n+},\beta_{n+}]\overline{\beta_n}[\beta_{n-},\alpha_{n-}]\]
such that $\alpha=\lim_\omega\alpha_n,\beta=\lim_\omega\beta_n$. Denote $\gamma_n=[\alpha_{n-},\beta_{n-}], \delta_n=[\alpha_{n+},\beta_{n+}]$.
We have that $d(\alpha_{n-},\beta_{n-})=|\gamma_n|$ and $d(\alpha_{n+},\beta_{n+})=|\delta_n|$ are of order $o(l_n)$ $\omega$-almost surely.

For each $n$, let $D_n$ be a diagram over $\langle S|R\rangle$ as in Lemma \ref{Lemma:CombPolygon} filling the quadrangle $Q_n$. 

\vspace{1em}
\noindent\textbf{Case 1.} $(D_n)$ are non-degenerate, irreducible $\omega$-almost surely.

Since $D_n$ is simple, Lemma \ref{lemma:SpecialCombGeodQuadrangles} tells us that
there is a path $p_n\subseteq D_n$ either between $\alpha_n$ and $\beta_n$ or between $\gamma_n$ and $\delta_n$ which is a concatenation of at most $6$ interior arcs of $D_n$. By our choice of diagram $D_n$, the interior arcs of $D_n$ are pieces, thus the length of $p_n$ satisfies $|p_n| \leq6M$. If $p_n$ is between $\gamma_n$ and $\delta_n$ $\omega$-almost surely, 
then $k_n=d(\gamma_n,\delta_n)\leq_\omega 6M$. Thus,
\[d(\alpha_{n-},\alpha_{n+})\leq_\omega d(\alpha_{n-},\beta_{n-})+k_n+d(\alpha_{n+},\beta_{n+})=o(l_n),\]
which contradicts with $d(\alpha_{n-},\alpha_{n+})=O(l_n)$.
However, if $p_n$ is between $\alpha_n$ and $\beta_n$ $\omega$-almost surely, it will contradict the requirements that $Q_n$ is $\theta$-fat and $\theta>6M$.
Therefore, this case can not happen.

\vspace{1em}
\noindent\textbf{Case 2.} $(D_n)$ are non-degenerate, reducible $\omega$-almost surely.

Since $D_n$ is simple, $D_n$ is not vertex reducible. Now assume that there is a face reduction. We suppose that $\Pi_n$ is a face with edges $e_n,e_n'\subset \Pi_n$ such that $D_n$ admits a face reduction at $(\Pi_n,e_n,e_n')$. Since $D_n$ is non-degenerate, there are four distinguished faces.

\noindent\textbf{Subcase 2.1.}
Suppose $e_n,e_n'$ are edges of two opposite sides $\omega$-almost surely. First, assume $e_n\subset\alpha_n$ and $e_n'\subset\beta_n$.
Choose $x_n\in e_n, x_n'\in e_n'$, and denote by $x=\lim_\omega x_n,x'=\lim_\omega x_n'$.
Face reduction at $(\Pi_n,e_n,e_n')$ gets two combinatorial triangles $\Delta_n,\Delta_n'$ with $\gamma_n\subset\Delta_n,\delta_n\subset\Delta_n'$. 
By Lemma \ref{lemma:OneShapeAlmostSurely}, $\Delta_n,\Delta_n'$ has one form $\omega$-almost surely.

If $\Delta_n$ is of form $\mathrm{II},\mathrm{III},\mathrm{IV},$ or $\mathrm{V}$, then there is a path from $\alpha_n$ to $\beta_n$ which is a concatenation of at most $4$ interior edges of $D_n$, contradicting the choice that $Q_n$ is $\theta$-fat.
By the same reason, $\Delta_n'$ cannot be one of form $\mathrm{II},\mathrm{III},\mathrm{IV},$ or $\mathrm{V}$ $\omega$-almost surely.

If $\Delta_n$ is of form $\mathrm{I}_1,\mathrm{I}_2$ or $\mathrm{I}_3$, then $[\alpha_-,x]_\alpha\cup[\beta_-,x']_\beta$ is contained in a set $A\in\mathcal{A}_\omega$.
Similarly, we can obtain that $[e,\alpha_+]_\alpha\cup[e',\beta_+]_\beta$ is contained in some set $A'\in\mathcal{A}_\omega$.
If $e=e'$, then either $e=\alpha_-$ or $e=\alpha_+$ since $P$ is simple, and thus $P=\alpha\cup\beta$ is contained in one subset from $\mathcal{A}_\omega$. Otherwise $e$ and $e'$ are two different intersection points of $A\cap A'$, then by Corollary \ref{corollary:ASatisfiesT1}, $\mathcal{A}_\omega$ satisfies property $(T_1)$, which imply $A=A'$.

Next, assume $e_n\subset\gamma_n$ and $e_n'\subset\delta_n$. Face reduction at $(\Pi_n,e_n,e_n')$ gets two combinatorial triangles $\Delta_n'',\Delta_n'''$ with $\alpha_n\subset\Delta_n'',\beta_n\subset\Delta_n'''$.

If $\Delta_n''$ is of form $\mathrm{II},\mathrm{III},\mathrm{IV},$ or $\mathrm{V}$, then there is a path from $\gamma_n$ to $\delta_n$ which is a concatenation of at most $4$ interior edges of $D_n$. Same arguments as Case 1 lead to a contradiction. Similarly, $\Delta_n'''$ cannot be one of form $\mathrm{II},\mathrm{III},\mathrm{IV},$ or $\mathrm{V}$ $\omega$-almost surely as well.

If $\Delta_n''$ is of form $\mathrm{I}_1,\mathrm{I}_2$ or $\mathrm{I}_3$, then $\alpha$ is contained in a set $A''\in\mathcal{A}_\omega$.
Similarly, we can obtain that $\beta$ is contained in some set $A'''\in\mathcal{A}_\omega$.
Since $\alpha_-=\beta_-$ and $\alpha_+=\beta_+$ are two different intersection points of $A''\cap A'''$, we have $A''=A'''$ by Corollary \ref{corollary:ASatisfiesT1} again.

\noindent\textbf{Subcase 2.2.}
Suppose $e_n,e_n'$ are edges of two adjacent sides. Without loss of generality, we assume $e_n\subseteq \alpha_n$ and $e_n'\subseteq \gamma_n$. Face reduction of $D_n$ at $(\Pi_n,e_n,e_n')$ gets a bigon and a quadrangle. Then there exists a single piece connecting $\alpha_n$ and $\gamma_n$.
However, for any such piece $p_n\subseteq D_n$ with $p_{n-}\in \alpha_n$ and $p_{n+}\in\gamma_n$, the $\omega$-limit of the triangle
\[[\alpha_{n-},p_{n-}]_{\alpha_n}\cup p_n\cup[p_{n+},\alpha_{n-}]_{\overline{\gamma_n}}\]
is a single point $\alpha_-$.
Let $a_n$ be the innermost piece between $\alpha_n$ and $\gamma_n$ (corresponding to $\alpha_{n-}$),
i.e., there is no single piece between $[a_{n-},\alpha_{n+}]_{\alpha_n}$ and $[a_{n+},\gamma_{n+}]_{\gamma_n}$. Symmetrically, let $a_n',b_n,b_n'$ be the innermost piece corresponding to $\alpha_{n+},\beta_{n-},\beta_{n+}$, respectively, if it exists. Let $D_n''$ be the subdiagram of $D_n$ with sides
\[[a_{n-},a_{n-}']_{\alpha_n},a_n'[a_{n+}',b_{n-}']_{\delta_n}b_n',[b_{n+}',b_{n+}]_{\overline{\beta_n}},
\overline{b_n'}[b_{n-},a_{n+}]_{\overline{\gamma_n}}\overline{a_n}.\]
\begin{figure}[H]
  \centering
  \includegraphics[width=0.4\textwidth]{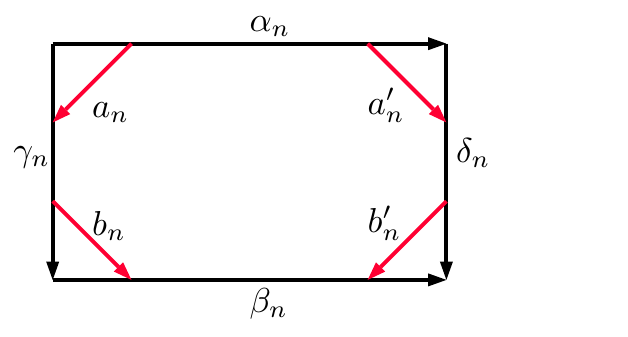}
  \caption{Subdiagram}\label{figure:subdiagram}
  \vspace{-0.5cm}
\end{figure}
The new diagram $D_n''$ is still a combinatorial quadrangle, and $D_n''$ has no face reduction of the present subcase.
We can replace $D_n$ by $D_n''$, and proceed the discussion either in Subcase 2.1 or in Case 1. Therefore the result also holds for this subcase.

\vspace{1em}
\noindent\textbf{Case 3.} $(D_n)$ are degenerate $\omega$-almost surely. Then $(D_n)$ are simple combinatorial triangles $\omega$-almost surely.

If $D_n$ is of form $\mathrm{II},\mathrm{III},\mathrm{IV},$ or $\mathrm{V}$, then there is a path from $\gamma_n$ to $\delta_n$ which is a concatenation of at most $4$ interior edges of $D_n$. The same argument as in case 1 then tells us this cannot happen.

If $D_n$ is of form $\mathrm{I}_1,\mathrm{I}_2$ or $\mathrm{I}_3$, then the same argument as in Case 2 implies that $P=\alpha\cup\beta$ is contained in one set from $\mathcal{A}_\omega$.
\end{proof}

\begin{Lemma}\label{lemma:EndpointPiece}
Let $Y$ be a geodesic metric space and let $\mathcal{B}$ be a collection of convex subsets of $Y$ which satisfies property $(\Omega_2)$. Then for any asymptotic cone $\mathrm{Con}_\omega(Y; (e_n), (l_n))$ of $Y$, any limit geodesic $\alpha$ with endpoints contained in a set $B\in\mathcal{B}_\omega$ is contained in $B$.
\end{Lemma}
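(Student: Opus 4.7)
The plan is to exhibit a second limit geodesic $\beta$ between the same two endpoints that already lies inside $B$, and then transfer containment from $\beta$ to $\alpha$ by comparing them through their simple sub-bigons, using $(\Omega_2)$ and Lemma \ref{lemma:OneImplyTheOther}.

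Concretely, I would first write $B = \lim_\omega B_n$ with $B_n \in \mathcal{B}$ convex, and pick representatives $x_n, y_n \in B_n$ with $\alpha_- = \lim_\omega x_n$ and $\alpha_+ = \lim_\omega y_n$; these exist because $\alpha_-, \alpha_+ \in B$. For each $n$ choose a geodesic $\sigma_n$ of $Y$ from $x_n$ to $y_n$. Convexity of $B_n$ forces $\sigma_n \subseteq B_n$, so $\beta := \lim_\omega \sigma_n$ is a limit geodesic in $\mathrm{Con}_\omega(Y; (e_n),(l_n))$ sharing its endpoints with $\alpha$ and lying entirely in $B$.

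If $\alpha = \beta$ there is nothing to prove. Otherwise $\alpha$ and $\beta$ are two distinct paths with the same endpoints, and every non-trivial simple bigon they form has its two edges as sub-arcs of $\alpha$ and $\beta$, hence as sub-arcs of limit geodesics; these sub-arcs are themselves limit geodesics. By hypothesis $(\Omega_2)$, every such simple bigon is contained in some member of $\mathcal{B}_\omega$. Invoking Lemma \ref{lemma:OneImplyTheOther} with $\beta$ as the path already known to sit inside $B$ now yields $\alpha \subseteq B$, which is the desired conclusion.

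The main point requiring care is that Lemma \ref{lemma:OneImplyTheOther} is stated under the additional hypothesis $(T_1)$, which is not explicitly part of the statement of Lemma \ref{lemma:EndpointPiece}. In the intended application this is not an obstruction, since Corollary \ref{corollary:ASatisfiesT1} supplies $(T_1)$ for $\mathcal{A}_\omega$ in every asymptotic cone. More generally, convexity of the elements of $\mathcal{B}$ together with $(\Omega_2)$ already forces $(T_1)$ in $\mathcal{B}_\omega$: if two distinct limit sets shared two distinct points, the construction above would produce two limit geodesics between those points, one inside each set, and a short analysis of the resulting bigons via $(\Omega_2)$ leads to a contradiction.
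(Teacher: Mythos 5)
Your main argument matches the paper's: construct $\beta = \lim_\omega [x_n,y_n] \subseteq B$ using convexity of the $B_n$, apply $(\Omega_2)$ to the non-trivial simple bigons formed by $\alpha$ and $\beta$ (whose edges are subarcs of limit geodesics, hence themselves limit geodesics), and invoke Lemma~\ref{lemma:OneImplyTheOther}. You are in fact slightly more careful than the paper here: you correctly quantify over \emph{every} simple sub-bigon formed by $\alpha$ and $\beta$, which is what Lemma~\ref{lemma:OneImplyTheOther} actually needs, and you note explicitly why $(\Omega_2)$ applies to each one. Your observation that Lemma~\ref{lemma:OneImplyTheOther} requires $(T_1)$, which is absent from the stated hypotheses of this lemma, is a genuine and correct remark; the paper's own proof carries the same implicit dependency, and Corollary~\ref{corollary:ASatisfiesT1} supplies $(T_1)$ in the one place the lemma is invoked.

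However, your closing claim --- that convexity of the elements of $\mathcal{B}$ together with $(\Omega_2)$ already forces $(T_1)$ in $\mathcal{B}_\omega$ --- is not correct. Take $Y=\mathbb{R}^2$ with the Euclidean metric and let $\mathcal{B}$ be the family of all closed half-planes: each is convex, and $(\Omega_2)$ holds vacuously because geodesics in any asymptotic cone of $\mathbb{R}^2$ (which is again $\mathbb{R}^2$) are unique, so there are no non-trivial simple geodesic bigons at all. Yet $(T_1)$ fails badly in $\mathcal{B}_\omega$: two distinct limit half-planes can intersect in a strip or a half-plane. Your heuristic breaks down precisely when the two limit geodesics you build between the two shared points coincide --- then there is no bigon and no contradiction to extract. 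So $(T_1)$ should be treated as an additional hypothesis (or imported from the intended application), not derived from convexity and $(\Omega_2)$.
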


\begin{proof}
Set $B=\lim_\omega B_n$. Then there exist $a_n,b_n\in B_n$, so that $\alpha_-=\lim_{\omega}a_n,\alpha_+=\lim_{\omega}b_n$. Since $B_n$ is convex, the geodesic $[a_n,b_n]\subseteq B_n$, and thus the limit geodesic $\beta=\lim_{\omega}[a_n,b_n]\subseteq B$. Moreover, the edges of the simple bigon $P$ formed by $\alpha$ and $\beta$ are limit geodesics, therefore, such bigon $P$ is contained in a subset from $\mathcal{B}_\omega$ since $\mathcal{B}$ satisfies property $(\Omega_2)$.
Then by Lemma \ref{lemma:OneImplyTheOther}, we have $\alpha\subseteq B$.
\end{proof}

\begin{Lemma}\label{lemma:SimpleTriangle}
Assume that the simple triangle $\Delta$ is contained in the $\omega$-limit of a sequence simple triangles $T_n=\alpha_n\beta_n\gamma_n$, i.e., $\Delta\subseteq\lim_\omega T_n$. Assume further that the intersection of the three edges $\lim_\omega\alpha_n$, $\lim_\omega\beta_n$, $\lim_\omega\gamma_n$ of $\lim_\omega T_n$ is empty. Then $\Delta$ is contained in a set $A\in \mathcal{A}_\omega$.
\end{Lemma}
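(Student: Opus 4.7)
The plan is to follow the same strategy as the preceding proof of $(\Omega_2)$: for each $n$, produce a filling diagram of $T_n$ and analyze it via Strebel's classification of simple combinatorial triangles.

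First, for each $n$, I would invoke Lemma \ref{Lemma:CombPolygon} to obtain a diagram $D_n$ over $\langle S\mid R\rangle$ filling $T_n$ whose interior arcs are pieces of length at most $M$. Since $T_n$ is a simple cycle, $D_n$ may be arranged to be a simple combinatorial triangle. By Lemma \ref{lemma:OneShapeAlmostSurely}, the form of $D_n$ is then a fixed element of $\{\mathrm{I}_2,\mathrm{I}_3,\mathrm{II},\mathrm{III},\mathrm{IV},\mathrm{V}\}$ $\omega$-almost surely.

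For the ``fat'' forms $\mathrm{II},\mathrm{III},\mathrm{IV},\mathrm{V}$, I would derive a contradiction with the empty-triple-intersection hypothesis. Each of these forms possesses a central interior feature (an interior face in $\mathrm{II},\mathrm{III}$; an interior vertex in $\mathrm{IV},\mathrm{V}$) from which each of the three sides $\alpha_n,\beta_n,\gamma_n$ is reachable by a uniformly bounded number of interior arcs. Since each interior arc is a piece of length at most $M$, this produces points $a_n\in\alpha_n,\ b_n\in\beta_n,\ c_n\in\gamma_n$ with pairwise distances bounded by a universal constant, so their $\omega$-limits coincide and give a common point of $\lim_\omega\alpha_n\cap\lim_\omega\beta_n\cap\lim_\omega\gamma_n$, contradicting the hypothesis. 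Hence these forms do not occur.

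For forms $\mathrm{I}_2$ and $\mathrm{I}_3$, I would reuse the combinatorial analysis from Subcase 2.1 of the preceding lemma's proof to conclude that two of the three limit sides, say $\lim_\omega\alpha_n$ and $\lim_\omega\beta_n$, are contained in a common $A\in\mathcal{A}_\omega$. The third side is then handled via Lemma \ref{lemma:EndpointPiece}: embedded components of $\Gamma$ are convex in $X$ by Lemma \ref{lemma:EmbedConvex}, so their ultralimits are convex in the cone, and $\mathcal{A}$ satisfies $(\Omega_2)$ by the previous lemma. The endpoints of $\lim_\omega\gamma_n$ are vertices of $\lim_\omega T_n$ that it shares with $\lim_\omega\alpha_n$ and $\lim_\omega\beta_n$, hence lie in $A$; Lemma \ref{lemma:EndpointPiece} then gives $\lim_\omega\gamma_n\subseteq A$. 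Therefore $\lim_\omega T_n\subseteq A$, and since $\Delta\subseteq\lim_\omega T_n$ we conclude $\Delta\subseteq A$.

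The hard part will be the combinatorial analysis of forms $\mathrm{I}_2,\mathrm{I}_3$: identifying the dominant interior face whose boundary supplies the embedded component containing two of the sides. This rests on the same structural understanding as Subcase 2.1 of the previous proof and is the most delicate ingredient in the argument.
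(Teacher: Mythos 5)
Your attempt to rule out form $\mathrm{II}$ is where the proposal breaks down, and it is the crux of the matter. The claim that form $\mathrm{II}$ ``possesses a central interior feature from which each of the three sides is reachable by a uniformly bounded number of interior arcs'' is false. In Strebel's form $\mathrm{II}$ the fans at the corners and the ladder are separate, and the ``central feature'' is a single large face $\Pi$ whose boundary consists of a bounded number of \emph{arcs}, some of which are geodesic boundary arcs (subsegments of the $\gamma_i$) that may be of order $O(l_n)$. There is no path of uniformly bounded length across $\Pi$ linking all three sides: one cannot shortcut through the $2$-cell, and going along $\partial\Pi$ may traverse long geodesic pieces. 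Consequently form $\mathrm{II}$ is not excluded by the empty-triple-intersection hypothesis, and your argument never addresses what happens when $\Delta$ genuinely sits inside the limit of this large central face. The paper's argument keeps form $\mathrm{II}$ alongside $\mathrm{I}_1,\mathrm{I}_2,\mathrm{I}_3$ and disposes of all of them at once by a different observation: in these forms every interior arc is a separating arc of uniformly bounded length (it is a piece), so in the cone the interior arcs degenerate to cut points of $\lim_\omega T_n$; a simple closed curve $\Delta\subseteq\lim_\omega T_n$ therefore cannot cross a cut point and must lie in the $\omega$-limit of a single face boundary, which is contained in an embedded component. That step is what your proposal is missing.

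Your treatment of forms $\mathrm{I}_2,\mathrm{I}_3$ is also shakier than it looks. You claim that two of the three limit sides lie entirely in a common $A\in\mathcal{A}_\omega$ and then feed the third side into Lemma \ref{lemma:EndpointPiece}. But for a long ladder-type diagram (already for form $\mathrm{I}_1$, which you omitted from the list, and similarly for $\mathrm{I}_2$, $\mathrm{I}_3$) the limit of $T_n$ is a \emph{chain} of many limit face boundaries glued at cut points, and no single $A\in\mathcal{A}_\omega$ contains two whole limit sides. The lemma only demands that the simple subtriangle $\Delta$ lies in one $A$, which is genuinely weaker and is what the cut-point argument delivers; your proposal tries to prove the stronger (and generally false) statement that $\lim_\omega T_n$ itself lies in one piece, and the appeal to Subcase 2.1 of the $(\Omega_2)$ proof does not transfer because that subcase relied on the fatness of the quadrangles $Q_n$.
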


\begin{proof}
Let $(D_n)$ be diagrams over $\langle S|R\rangle$ as in Lemma \ref{Lemma:CombPolygon} filling the triangles $(T_n)$ $\omega$-almost surely.
By Lemma \ref{lemma:OneShapeAlmostSurely}, $(D_n)$ have one form $\omega$-almost surely.

It is obvious that the $\omega$-limit of a sequence single pieces is either a single point or empty. If $(D_n)$ are of form $\mathrm{III},\mathrm{IV}$, or $\mathrm{V}$ $\omega$-almost surely, then the three edges of $\lim_\omega T_n$ always have a common point, contradicting the hypothesis.
Therefore, $(D_n)$ are of form $\mathrm{I}_1,\mathrm{I}_2,\mathrm{I}_3$ or $\mathrm{II}$ $\omega$-almost surely. Since $\Delta$ is simple, it must be contained in the $\omega$-limit of a sequence of boundaries of single faces $(\Pi_n)$ of $(D_n)$. Each $\partial\Pi_n$ is contained in an embedded component $\Gamma_n$, thus $\Delta$ is contained in the set $\lim_\omega\Gamma_n\in \mathcal{A}_\omega$.
\end{proof}

\begin{Lemma} \label{lemma:ASatisfyPi3}
$\mathcal{A}$ satisfies property $(\Omega_3)$: for any asymptotic cone $\mathrm{Con}_{\omega}(Y; (e_n),(l_n))$,
 any simple non-trivial triangles with edges limit geodesics is contained in a subset from $\mathcal{B}_\omega$.
\end{Lemma}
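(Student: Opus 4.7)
The plan is to realize $\Delta$ as the $\omega$-limit of a sequence of simple geodesic triangles in $X$ and then apply Lemma \ref{lemma:SimpleTriangle} directly, bypassing any need for a classification of combinatorial hexagons. First I will write the three limit geodesic edges as $\alpha=\lim_\omega\alpha_n$, $\beta=\lim_\omega\beta_n$, $\gamma=\lim_\omega\gamma_n$ for geodesic paths in $X$, and by moving their endpoints by amounts of order $o(l_n)$ (which is invisible in the asymptotic cone), arrange that the concatenation $T_n=\alpha_n\beta_n\gamma_n$ is a closed geodesic triangle with vertices $x_n,y_n,z_n$ approximating the vertices $x,y,z$ of $\Delta$.

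The crucial observation is that the simplicity of $\Delta$ forces the pairwise intersections of the approximating geodesics to concentrate near the shared endpoints. Since $\alpha\cap\beta=\{y\}$ in the cone and each approximating geodesic has length $O(l_n)$, every sequence of intersection points $p_n\in\alpha_n\cap\beta_n$ is bounded in the cone and must satisfy $\lim_\omega p_n=y$; hence any intersection of $\alpha_n$ and $\beta_n$ beyond $y_n$ lies within $o(l_n)$ of $y_n$. I will therefore trim $\alpha_n$ and $\beta_n$ past their outermost common point, replacing $y_n$ by a new shared vertex at distance $o(l_n)$ from it. Doing the analogous cleanup at $x_n$ and $z_n$ produces a sequence $\tilde T_n$ of simple closed geodesic triangles in $X$ with $\lim_\omega \tilde T_n=\alpha\cup\beta\cup\gamma=\Delta$.

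Once $\tilde T_n$ is simple, the three edges of $\lim_\omega \tilde T_n=\Delta$ are pairwise disjoint except at vertices and thus have empty three-way intersection; this is precisely the hypothesis of Lemma \ref{lemma:SimpleTriangle}, which produces a set $A\in\mathcal{A}_\omega$ containing $\Delta$, as required.

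The main obstacle I anticipate is the simultaneous simplification step. Trimming one pair of sides near $y$ to remove their spurious intersections may introduce new intersections between the other pairs near $x$ or $z$, so the cleanup at the three vertices must be carried out either by iteration or by a coordinated diagonal ultrafilter argument, with the crucial requirement that all three trims can be taken of size $o(l_n)$ simultaneously so that $\lim_\omega\tilde T_n$ is still exactly $\Delta$. A further subtlety is that an intersection point common to all three of $\alpha_n,\beta_n,\gamma_n$ could in principle force the trimmed triangle to degenerate; ruling this out uses the assumption that $\Delta$ is non-trivial, together with the observation that a common point of all three limit edges would contradict simplicity of $\Delta$. Once these technicalities are handled, the proof collapses to a single invocation of Lemma \ref{lemma:SimpleTriangle}.
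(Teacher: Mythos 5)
Your plan hinges on producing, in $X$, simple geodesic triangles $\tilde T_n$ with $\lim_\omega\tilde T_n=\Delta$, and then invoking Lemma~\ref{lemma:SimpleTriangle} once. The genuine gap is in the very first step: to make the three sides form a closed triangle you must replace the geodesics $\alpha_n,\beta_n,\gamma_n$ by new geodesics joining the appropriate shared vertices, and a new geodesic between endpoints that were merely moved by $o(l_n)$ need not have the same $\omega$-limit as the old one. Limit geodesics are not unique, and the Cayley graph of a $Gr'(\frac16)$ group is not hyperbolic, so the two geodesic families can diverge by $O(l_n)$. In the paper's notation, one is forced to pass from $\alpha_1,\beta_1,\gamma_1$ (the given edges of $\Delta$) to possibly different limit geodesics $\alpha_2,\beta_2,\gamma_2$, and the bulk of the proof is precisely about reconciling the two via property $(\Omega_2)$, property $(T_1)$ (Corollary~\ref{corollary:ASatisfiesT1}), and Lemmas~\ref{lemma:OneImplyTheOther} and~\ref{lemma:FourGeodesics}. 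Your proposal writes $\lim_\omega\tilde T_n=\alpha\cup\beta\cup\gamma=\Delta$ without any justification, and this identity is exactly what can fail.

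Closely related, the ``crucial observation'' that intersections of $\alpha_n$ and $\beta_n$ must concentrate near the shared vertex is derived from $\alpha\cap\beta=\{y\}$ for the \emph{original} limit geodesics; it does not automatically transfer to the new geodesics $\alpha_2,\beta_2$, whose intersection can a priori be a long arc. The paper's Case~2 (one of the six distances $d_\omega(\cdot,\cdot)$ is zero) is exactly the situation where the ``trim near the vertices'' picture breaks down: the new side $\gamma_2$ can touch the opposite side $\alpha_2$ far from both shared vertices, and the argument there requires the $(2M,0)$-contraction of the embedded components (Proposition~\ref{proposition:contraction}) together with Lemma~\ref{lemma:EndpointPiece}, not just a diagonal ultrafilter cleanup. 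So what you flag as a ``technicality'' is in fact the core of the proof; Lemma~\ref{lemma:SimpleTriangle} covers only the favourable Case~1 configuration of the inscribed triangle $\Delta'=[x,y]_{\gamma_2}[y,z]_{\alpha_2}[z,x]_{\beta_2}$, and even there one must afterwards transfer containment from $\Delta'$ back to $\Delta$ using the bigon properties.
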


\begin{proof}
Given an asymptotic cone $\mathrm{Con}_{\omega}(X; (e_n), (l_n))$ of $X$, let $\Delta$ be a non-trivial simple geodesic triangle in $\mathrm{Con}_{\omega}(X; (e_n), (l_n))$ whose edges $\alpha_1,\beta_1,\gamma_1$ are limit geodesics. Therefore, $\alpha_1,\beta_1,\gamma_1$ can be presented as
\[\gamma_1=\lim_\omega[a_n,b_n'],\alpha_1=\lim_\omega[b_n,c_n'],\beta_1=\lim_\omega[c_n,a_n'],\]
and $d(a_n,a_n'),d(b_n,b_n')$ and $d(c_n,c_n')$ are of order $o(l_n)$ $\omega$-almost surely.

We consider the $\omega$-limit of triangles $T_n=[a_n,b_n][b_n,c_n][c_n,a_n]$. Set $a=\lim_\omega a_n$, $b=\lim_\omega b_n$, $c=\lim_\omega c_n$.
And set
\[\gamma_2=\lim_\omega[a_n,b_n], \beta_2=\lim_\omega[c_n,a_n], \alpha_2=\lim_\omega[b_n,c_n].\]

Let $x$ be the furthest point in the intersection of $\gamma_2$ and $\overline{\beta_2}$ along $\gamma_2$, i.e., $x\in \gamma_2\cap\overline{\beta_2}$ and $(x,b]_{\gamma_2}\cap\overline{\beta_2}=\emptyset$.
Similarly, let $y$ (resp. $z$) be the furthest point in the intersection of $\alpha_2$ and $\overline{\gamma_2}$ (resp. $\beta_2$ and $\overline{\alpha_2}$) along $\alpha_2$ (resp. $\beta_2$).

If $x\neq a$, then $[a,x]_{\gamma_2}\cup[a,x]_{\overline{\beta_2}}$ is contained in a subset $A_1\in\mathcal{A}_\omega$ by Lemma \ref{lemma:FourGeodesics}. Symmetrically, if $y\neq b$ (resp. $z\neq c$), then $[b,y]_{\alpha_2}\cap[b,y]_{\overline{\gamma_2}}$ (resp. $[c,z]_{\beta_2}\cup[c,z]_{\overline{\alpha_2}}$) is contained in a subset $A_2\in\mathcal{A}_\omega$ (resp. $A_3\in\mathcal{A}_\omega$).

Consider the following $6$ numbers:
\begin{align*}
& d_\omega([a,x]_{\gamma_2},\alpha_2), d_\omega([a,x]_{\overline{\beta_2}},\alpha_2), d_\omega([b,y]_{\alpha_2},\beta_2), \\
& d_\omega([b,y]_{\overline{\gamma_2}},\beta_2), d_\omega([c,z]_{\beta_2},\gamma_2), d_\omega([c,z]_{\overline{\alpha_2}},\gamma_2).
\end{align*}

\vspace{1em}
\noindent\textbf{Case 1.}\,\,
None of the $6$ numbers equals to $0$.
Then the positions of $x,y,z$ must satisfy that $y\in (x,b]_{\gamma_2}$, $z\in (x,c]_{\overline{\beta_2}}$ and $z\in (y,c]_{\alpha_2}$ (See the left one of Figure \ref{figure:case1}). Moreover, the triangle $\Delta'=[x,y]_{\gamma_2}[y,z]_{\alpha_2}[z,x]_{\beta_2}$ is simple and the intersection $[x,y]_{\gamma_2}\cap[y,z]_{\alpha_2}\cap[z,x]_{\beta_2}$ is empty.

We can choose $[x_n,y_n']\subseteq [a_n,b_n]$, $[y_n,z_n']\subseteq[b_n,c_n]$, $[z_n,x_n']\subseteq[c_n,a_n]$ $\omega$-almost surely such that $[x_n,y_n'], [y_n,z_n'], [z_n,x_n']$ are piecewise disjoint and
\[\lim_\omega[x_n,y_n']=[x,y]_{\gamma_2}, \lim_\omega[y_n,z_n']=[y,z]_{\alpha_2}, \lim_\omega[z_n,x_n']=[z,x]_{\beta_2}.\]

Let $p_n$ (resp. $q_n,r_n$) be the furthest point in the intersection of $[a_n,b_n]$ and $\overline{[c_n,a_n]}$ (resp, $[b_n,c_n]$ and $\overline{[a_n,b_n]}$, $[c_n,a_n]$ and $\overline{[b_n,c_n]}$) along $[a_n,b_n]$ (resp. $[b_n,c_n]$, $[c_n,a_n]$).
Since $d_\omega([a,x]_{\gamma_2},\alpha_2)>0$, we have that $[p_n,x_n]\subseteq[a_n,b_n]$ is disjoint from $[b_n,c_n]$. By the same reason, we know that the triangle $[p_n,q_n][q_n,r_n][r_n,p_n]\subseteq T_n$ is simple (See the right one of Figure \ref{figure:case1}).

Therefore, we have proved that $\Delta'$ is contained in the $\omega$-limit of a sequence simple triangles $[p_n,q_n][q_n,r_n][r_n,p_n]$. Then by Lemma \ref{lemma:SimpleTriangle}, $\Delta'$ is contained in a subset $A_0\in\mathcal{A}_\omega$.

If $x\neq a$, we have $x\notin\gamma_1$ or $x\notin\beta_1$ since $\Delta=\alpha_1\beta_1\gamma_1$ is simple.
Without loss of generality, we may assume that $x\notin\gamma_1$. By \cite[Lemma 3.7]{D09}, $x$ is in the interior of some simple bigon formed by $\gamma_1$ and $\gamma_2$, then property $(\Omega_2)$ tells us that this bigon is in contained a subset $A_1'\in\mathcal{A}_\omega$. Since $A_1'\cap A_0$ and $A_1'\cap A_1$ contain non-trivial sub-arcs of $\gamma_2$, property $(T_1)$ (Corollary \ref{corollary:ASatisfiesT1}) implies that $A_0=A_1'=A_1$.

Symmetrically, if $y\neq b$ (resp. $z\neq c$),  we have that $A_0=A_2$ (resp. $A_0=A_3$). Therefore, we have that $A_0=A_1=A_2=A_3$. So $\Delta'$ is contained in $A_0$, and thus $\Delta$ is contained in $A_0$ as well by Lemma \ref{lemma:OneImplyTheOther}.

If $x=a$, $y=b$ or $z=c$, we can just replace the corresponding $A_i$ by $A_0$. The result still holds.

\begin{figure}[H]
\captionsetup[subfigure]{labelformat=empty}
  \centering\hfill%

\begin{subfigure}[h]{.45\textwidth}\centering
\includegraphics[width=0.9\textwidth]{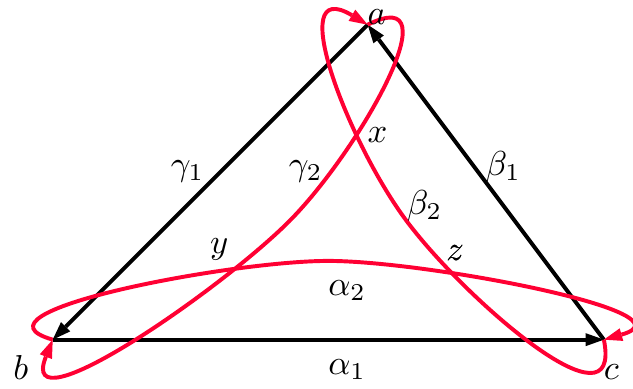}
   \end{subfigure}\hfill%
  \begin{subfigure}[h]{.55\textwidth}\centering
\includegraphics[width=0.95\textwidth]{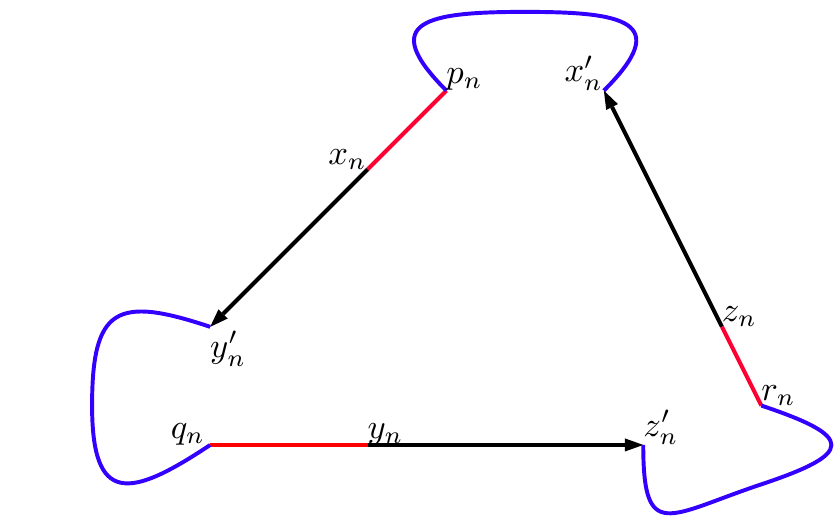}
  \end{subfigure}\hfill

\caption{Case 1.}
\label{figure:case1}
\end{figure}

\vspace{1em}
\noindent\textbf{Case 2.}\,\,One of the $6$ numbers equals to $0$.

Without loss of generality, we set $d_\omega([a,x]_{\gamma_2},\alpha_2)=0$.
Now let $y''$ be the furthest point in the intersection of $\overline{\gamma_2}$ and $\alpha_2$ along $\overline{\gamma_2}$, then $y''\in[a,x]_{\gamma_2}$. If $y''=b$, then $\gamma_2\subseteq A_1$, otherwise $[b,y'']_{\overline{\gamma_2}}\cup[b,y'']_{\alpha_2}$ is contained in a subset $A_2'\in\mathcal{A}_\omega$ by Lemma \ref{lemma:FourGeodesics}.
Therefore $\gamma_2\subseteq A_1\cup A_2'$ and $x\in A_1\cap A_2'$. Since $\Delta$ is simple, we have that $x\notin\gamma_1$ or $x\notin\beta_1$. The same argument as in the previous case then tells us that $A_1=A_2'$. Thus we always have that $\gamma_2\subseteq A_1$.  By Lemma \ref{lemma:EndpointPiece}, we know that $\gamma_1\subseteq A_1$ as well.

If $c\in A_1$, then by Lemma \ref{lemma:EndpointPiece} again, we know $\alpha_1,\beta_1\subseteq A_1$. Thus $\Delta$ is contained in $A_1$.

Next we assume that $c\notin A_1$. Let $[c,\widetilde{a})_{\beta_1}$ be the maximal segment of $\beta_1$ which is in outside of $A_1$, i.e. $\widetilde{a}\in A_1$ and $[c,\widetilde{a})_{\beta_1}\cap A_1=\emptyset$ (Notice $A_1$ is closed, such $\widetilde{a}$ exists). Similarly, let $[c,\widetilde{b})_{\overline{\alpha_1}}$ be the maximal segment of $\overline{\alpha_1}$ which is in outside of $A_1$.

By our definition of $\mathcal{A}_\omega$, there exists a sequence embedded components $(\Gamma_n)$ such that $A_1=\lim_{\omega}\Gamma_n$.

We can choose $\widetilde{a}_n\in[c_n,a_n']$ and $\widetilde{b}_n\in[b_n,c_n']$ $\omega$-almost surely, such that $\lim_\omega \widetilde{a}_n=\widetilde{a}$, $\lim_\omega\widetilde{b}_n=\widetilde{b}$, $[c_n,\widetilde{a}_n]\subseteq [c_n,a_n']$ and $[\widetilde{b}_n,c_n']\subseteq [b_n,c_n']$ are disjoint from $N_M(\Gamma_n)$.
Since $c\notin A_1$ and $\lim_\omega c_n=c=\lim_\omega c_n'$, we have that $[c_n',c_n]$ is disjoint from $N_M(\Gamma_n)$.
Since $\lim_\omega \widetilde{a}_n=\widetilde{a}$ and $\widetilde{a}\in A_1$, we obtain that $d(\widetilde{a}_n,\Gamma_n)=o(l_n)$.
We also have that $d(\widetilde{b}_n,\Gamma_n)=o(l_n)$ via the same reason.
By Proposition \ref{proposition:contraction}, $\Gamma_n$ is $(2M,0)$-contracting, so
\[\mathrm{diam}(\pi_{\Gamma_n}([\widetilde{b}_n,c_n']))\leq M, \mathrm{diam}(\pi_{\Gamma_n}([c_n',c_n]))\leq M, \mathrm{diam}(\pi_{\Gamma_n}([c_n,\widetilde{a}_n]))\leq M.\]
Thus, we have that
\begin{align*}
d(\widetilde{b}_n,\widetilde{a}_n)\leq& d(\widetilde{b}_n,\Gamma_n)+ \mathrm{diam}(\pi_{\Gamma_n}([\widetilde{b}_n,c_n']))+\mathrm{diam}(\pi_{\Gamma_n}([c_n',c_n])) \\
& + \mathrm{diam}(\pi_{\Gamma_n}([c_n,\widetilde{a}_n]))+d(\widetilde{a}_n,\Gamma_n)  \\
\leq& o(l_n)+6M+o(l_n)=o(l_n).
\end{align*}

But $\widetilde{a}\neq\widetilde{b}$ imply that $d(\widetilde{b}_n,\widetilde{a}_n)=O(l_n)$. This is a a contradiction. So our assumption $c\notin A_1$ is not true.

Hence, $\Delta$ is always contained in a subset from $\mathcal{A}_\omega$. We have completed the proof.
\end{proof}

\begin{proof}[Proof of Theorem \ref{thm:RelativeHyperbolicity}]
Assume that  all pieces of $\Gamma$ have uniformly bounded length. According to Lemma \ref{lemma:tree-graded}, the properties ($\Lambda_1$),  ($\Lambda_2$) and ($\Omega_3$) are verified by Lemmas \ref{lemma:ASatisfyAlpha1}, \ref{lemma:ASatisfyAlpha2} and \ref{lemma:ASatisfyPi3} respectively, so  $\mathscr{G}(G(\Gamma),S)$ is asymptotically tree-graded with respect to the collection $\mathcal{A}$ of all embedded components of $\Gamma$. For the converse direction, if $\mathscr{G}(G(\Gamma),S)$ is asymptotically tree-graded with respect to $\mathcal{A}$, then the ``only if'' part of Lemma \ref{lemma:tree-graded} tells us that $\mathcal{A}$ satisfies property ($\Lambda_1$), in particular, the intersection of two different components are uniformly bounded.  Lemma \ref{lemma:PieceIntersection} thus implies  that the pieces of $\Gamma$ are uniformly bounded.
\end{proof}

\begin{Examples}
\vspace{.5em}
\noindent\textbf{1.}
Let $G_1, G_2$ be two groups generated by $S_1, S_2$ respectively. Let $\Gamma_i$ be the Cayley graph of $(G_i,S_i)$ for $i=1,2$ and let $\Gamma=\Gamma_1\sqcup \Gamma_2$. Then $\Gamma$ has no pieces with length bigger than $0$. Obviously,  $G(\Gamma)=G_1\ast G_2$ is  hyperbolic relative to $G_1,G_2$.

\vspace{.5em}
\noindent\textbf{2.}
Let $S=\{a,b,x,y,z,w\}$. And let
\[G_1=\langle x,y|[x,y]\rangle, G_2=\langle z,w|[z,w]\rangle, X_1=\mathscr{G}(G_1,\{x,y\}), X_2=\mathscr{G}(G_2,\{z,w\}).\]

Let $\Gamma_1$ be the graph obtained from the Cayley graph $X_1$ as follows: for each $g\in G_1$, attach  a  simple path $p$ labelled by $a^2b^3$ to the pair of vertices $(g,gx^{15}y^{15})$ in $X_1$.

Similarly, let $\Gamma_2$ be the graph constructed from $X_2$ so that each pair of vertices $(g, gz^{15}w^{15})$ is connected by a simple path labeled by $a^2b^2$.

Denote $\Gamma=\Gamma_1\sqcup \Gamma_2$. Then the pieces of $\Gamma$ are those subpaths of the simple path labelled either by $a^2b^2a^2$
or by its inverse $a^{-2}b^{-2}a^{-2}$. Thus, $\Gamma$ is $Gr'(\frac{1}{6})$-labelled. Then for the basepoints $1_{G_1}\in \Gamma_1$, $1_{G_2}\in \Gamma_2$ and $1_{G}\in \mathscr{G}(G(\Gamma),S)$,  the image of the label-preserving map $(\Gamma_i,1_{G_i})\rightarrow (\mathscr{G}(G(\Gamma),S),1_G)$ gives embedded components $\Gamma_i'$ in $\mathscr{G}(G(\Gamma),S)$ by Lemma \ref{lemma:EmbedConvex}. Furthermore the stabilizer of $\Gamma_1'$ is the subgroup of $G(\Gamma)$ generated by $\{x,y,a^2b^3\}$, which is $G_1$ since $a^2b^3=x^{15}y^{15}$. Similarly, the stabilizer of $\Gamma_2'$ is $G_2$.
Thus by Theorem \ref{thm:RelativeHyperbolicity}, $G(\Gamma)$ is hyperbolic relative to $G_1,G_2$.
\begin{figure}[H]
  \centering
  \includegraphics[width=0.7\textwidth]{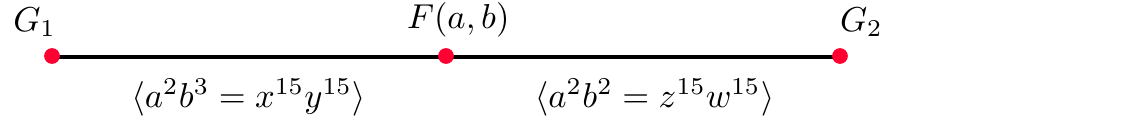}
  \caption{The graph of groups}\label{figure:graphofgroup}
  \vspace{-0.5cm}
\end{figure}
In fact, $G(\Gamma)$ can be presented as a fundamental group of a graph of group with vertex groups $\{G_1,G_2,F(a,b)\}$ and edge groups
$\{\langle a^2b^3=x^{15}y^{15}\rangle\cong\mathbb{Z},\langle a^2b^2=z^{15}w^{15}\rangle\cong\mathbb{Z}\}$, where $F(a,b)$ is the free group on $\{a,b\}$. (See Figure \ref{figure:graphofgroup})
\end{Examples}


\begin{thebibliography}{99}
\bibitem{ACGH19}  G. N. Arzhantseva, C. H. Cashen, D. Gruber, and D. Hume, \emph{Negative curvature in graphical small cancellation groups}. Groups Geom. Dyn. 13 (2019), no. 2, 579-632.

\bibitem{AD08}  G. N. Arzhantseva and T. Delzant, \emph{Examples of random groups}. preprint, available on the authors' webpages (2008).

\bibitem{dDW84} L. van den Dries and A. J. Wilkie, \emph{Gromov's theorem on groups of polynomial growth and elementary logic}. J. Algebra 89 (1984), no. 2, 349-374.

\bibitem{D09}  C. Dru\c{t}u, \emph{Relatively hyperbolic groups: geometry and quasi-isometric invariance}. Comment. Math. Helv. 84 (2009), no. 3, 503-546.

\bibitem{DS05}  C. Dru\c{t}u and M. Sapir, \emph{Tree-graded spaces and asymptotic cones of groups. With an appendix by Denis Osin and Mark Sapir}. Topology 44 (2005), no. 5, 959-1058.

\bibitem{GP15}  V. Gerasimov and L. Potyagailo, \emph{Quasiconvexity in relatively hyperbolic groups}. J. Reine Angew. Math. 710 (2016), 95-135.

\bibitem{G81}  M. Gromov, \emph{Groups of polynomial growth and expanding maps}. Inst. Hautes \'{E}tudes Sci. Publ. Math. No. 53 (1981), 53-73.

\bibitem{G87} M. Gromov, \emph{Hyperbolic groups}. Essays in Group Theory, 75-263, Math. Sci. Res. Inst. Publ. 8, Springer, New York, 1987.

\bibitem{G03}  M. Gromov, \emph{Random walk in random groups}. Geom. Funct. Anal. 13 (2003), no. 1, 73-146.

\bibitem{dG15}  Dominik Gruber, \emph{Groups with graphical $C(6)$ and $C(7)$ small cancellation presentations}. Trans. Amer. Math. Soc. 367 (2015), no. 3, 2051-2078.

\bibitem{dG15Thesis}  Dominik Gruber, \emph{Infinitely presented graphical small cancellation groups}. PhD Thesis, University of Vienna (Austria), 2015.

\bibitem{GS18}  Dominik Gruber and Alessandro Sisto, \emph{Infinitely presented graphical small cancellation groups are acylindrically hyperbolic}. Ann. Inst. Fourier (Grenoble) 68 (2018), no. 6, 2501-2552.

\bibitem{HLS02} N. Higson, V. Lafforgue, and G. Skandalis, \emph{Counterexamples to the Baum-Connes conjecture}. Geom. Funct. Anal. 12 (2002), no. 2, 330-354.

\bibitem{O06} Y. Ollivier, \emph{On a small cancellation theorem of Gromov}. Bull. Belg. Math. Soc. Simon Stevin 13 (2006), no. 1, 75-89.

\bibitem{P99} A. E. Pankrat'ev, \emph{Hyperbolic products of groups}. Vestnik Moskov. Univ. Ser. I Mat. Mekh. 1999, no. 2, 9-13, 72; translation in Moscow Univ. Math. Bull. 54 (1999), no. 2, 9-12.

\bibitem{RS87}  E. Rips and Y. Segev, \emph{Torsion-free group without unique product property}. J. Algebra 108 (1987), no. 1, 116-126.

\bibitem{S15}  M. Steenbock, \emph{Rips-Segev torsion-free groups without the unique product property}. J. Algebra 438 (2015), 337-378.

\bibitem{S90}  Ralph Strebel, \emph{Appendix. Small cancellation groups}. Sur les groupes hyperboliques d'apr\`{e}s Mikhael Gromov (Bern, 1988), 227-273, Progr. Math., 83, Birkh\"{a}user Boston, Boston, MA, 1990.

\bibitem{Y14}  W.Y. Yang, \emph{Growth tightness for groups with contracting elements}. Math. Proc. Cambridge Philos. Soc. 157 (2014), no. 2, 297-319.
\end{thebibliography}
\end{document}